\newtheorem*{thma}{Theorem A}
\newtheorem*{thmb}{Theorem B}
\newtheorem*{thmc}{Theorem C}
\newtheorem*{thma'}{Theorem A*}
\newtheorem*{thmc'}{Theorem C*}
\newtheorem{thm}{Theorem}[section]
\newtheorem{defn}[thm]{Definition}
\newtheorem{prop}[thm]{Proposition}
\newtheorem{remk}[thm]{Remark}
\newtheorem{conj}[thm]{Conjecture}
\newtheorem{lem}[thm]{Lemma}
\newtheorem{cor}[thm]{Corollary}
\newcommand{\Z}{\mathbb Z}
\newcommand{\R}{\mathbb R}
\newcommand{\Q}{\mathbb Q}
\title[Generators of Bieberbach groups]{Generators of Bieberbach groups with 2-generated holonomy group}
\author{Ho Yiu CHUNG}
\keywords{crystallographic group, Bieberbach group, generators, cyclic group.}%
\date{\today}
\begin{document}
\begin{abstract} An $n$-dimensional Bieberbach group is the fundamental group of a closed flat $n$-dimensional manifold. K. Dekimpe and P. Penninckx conjectured that an $n$-dimensional Bieberbach group can be generated by $n$ elements. 
In this paper, we show that the conjecture is true if the holonomy group is 2-generated (e.g. dihedral group, quaternion group or simple group) or the order of holonomy group is not divisible by 2 or 3. 
In order to prove this, we show that an $n$-dimensional Bieberbach group with cyclic holonomy group of order larger than two can be generated by $(n-1)$ elements.
\end{abstract}

\maketitle

\vspace{-3mm}

\section{Introduction}
We first introduce the geometric definition of a crystallographic groups. A group $\Gamma$ is said to be an {\it $n$-dimensional crystallographic group} if it is a discrete subgroup of $\mathbb{R}^n\rtimes O(n)$, which is the group of isomotries of $\mathbb{R}^n$ and it acts cocompactly on $\mathbb{R}^n$. 
By The First Bieberbach Theorem, \cite[Theorem 2.1]{cry}, $\Gamma\cap(\R^n\times I)$ is isomorphic to $\mathbb{Z}^n$ and $\Gamma/\Gamma\cap(\mathbb{R}^n\times I)$ is a finite group called the {\it holonomy groups of $\Gamma$}.
We say $\Gamma$ is an {\it $n$-dimensional Bieberbach group} if it is an $n$-dimensional torsion-free crystallographic group.
In this paper, we focus on the below conjecture.
\begin{conj}\cite[Dekimpe-Penninckx]{nm gen bieb}
	{\normalfont Let $\Gamma$ be an $n$-dimensional Bieberbach group. Then the minimum number of generators of $\Gamma$ is less than or equal to $n$.} 
\end{conj}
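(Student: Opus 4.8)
The plan is to establish the conjecture in the tractable ranges — $2$-generated holonomy, and holonomy of order coprime to $6$ — by isolating and settling the case of cyclic holonomy and reducing the rest to it. Write $\Gamma$ as an extension $1\to\Z^n\to\Gamma\xrightarrow{\pi}G\to1$ with $G$ the (finite) holonomy group acting faithfully on $\Z^n$; write $d(-)$ for the minimal number of generators and $d_R(-)$ for its analogue over a ring $R$.

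The heart is the cyclic case: if $G=\langle g\rangle$ is cyclic of order $m>2$ I would prove $d(\Gamma)\le n-1$ as follows. Fix a lift $\tilde g$ of $g$ and set $a=\tilde g^{\,m}\in\Z^n$; since $g^m=1$, conjugation shows $a$ is $g$-fixed and central in $\Gamma$. A rewriting argument gives $\langle\tilde g,t_1,\dots,t_r\rangle\cap\Z^n=\sum_i\Z[G]t_i+\Z a$ for translations $t_i\in\Z^n$, so
\[
 d(\Gamma)\ \le\ 1+d_{\Z[G]}\bigl(\Z^n/\Z a\bigr).
\]
Torsion-freeness forces $a\neq0$, and after adjusting the lift one may take $\Z^n/\Z a$ torsion-free (in a few cases it retains a cyclic torsion summand, which the slack below absorbs), so that $M:=\Z^n/\Z a$ is a faithful $\Z[G]$-lattice of rank $n-1$. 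Let $V$ be the $\zeta_m$-isotypic part of $M\otimes\Q$, on which the $G$-action factors through $\Z[\zeta_m]$; faithfulness gives $\dim_\Q V=e\,\varphi(m)$ with $e\ge1$. Then $L:=M\cap V$ is a rank-$e$ torsion-free module over the Dedekind domain $\Z[\zeta_m]$, so $d_{\Z[G]}(L)=d_{\Z[\zeta_m]}(L)\le e+1$, while $M/L$ is torsion-free of rank $n-1-e\varphi(m)$; since $d_{\Z[G]}(M)\le d_{\Z[G]}(L)+d_{\Z[G]}(M/L)$,
\[
 d(\Gamma)\ \le\ 1+(e+1)+\bigl(n-1-e\varphi(m)\bigr)\ =\ n+1-e\bigl(\varphi(m)-1\bigr).
\]
If $m\notin\{3,4,6\}$ then $\varphi(m)\ge4$, so this is $\le n-2$; if $m\in\{3,4,6\}$ then $\Z[\zeta_m]$ is a principal ideal domain, so $L$ is free, the term $e+1$ improves to $e$, and the bound becomes $n-e\le n-1$. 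Hence $d(\Gamma)\le n-1$ whenever $m>2$; the same computation with $m=2$ (where torsion-freeness again gives $a\neq0$) yields $d(\Gamma)\le n$.

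Granting this, the $2$-generated case is short. Let $G=\langle x,y\rangle$. If $G$ has an element of order $>2$ one may pick the generating pair so that $|x|>2$: if $x,y$ are both involutions then $\langle x,y\rangle$ is a quotient of the infinite dihedral group, hence dihedral, and unless it is $1$, $C_2$ or $(C_2)^2$ it contains the rotation $xy$ of order $>2$ with $G=\langle xy,x\rangle$. Then $\Gamma_x:=\pi^{-1}(\langle x\rangle)$ is an $n$-dimensional Bieberbach group with cyclic holonomy of order $>2$, so $d(\Gamma_x)\le n-1$, and $\Gamma=\langle\Gamma_x,\tilde y\rangle$ (any lift $\tilde y$ of $y$) gives $d(\Gamma)\le n$. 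The residual cases $G\in\{1,C_2,(C_2)^2\}$ are treated directly ($G=1$ trivial; $G=C_2$ is the $m=2$ instance). Dihedral groups, quaternion groups and non-abelian simple groups all contain an element of order $>2$ — for simple groups because, by Burnside's $p^aq^b$-theorem, their order has a prime factor $\ge5$ — so they all fall under the main case. For $|G|$ coprime to $6$ the approach is different: such a $G$ has no subgroup of index $2$ and no quotient $C_3$, $C_4$ or $C_6$, so every nontrivial irreducible $\Q G$-module has $\Q$-dimension $\ge4$; decomposing $\Z^n$ into isotypic components, bounding the number of $\Z[G]$-generators of each nontrivial component by its multiplicity, and using that consequently $d(G)$ is small relative to $n$ (roughly $d(G)\le n/4$), one checks that $d(G)$ well-chosen lifts of a generating set of $G$ together with at most $n-d(G)$ translations suffice.

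The genuine obstacles are two. First, $G=(C_2)^2$: a cyclic index-$2$ subgroup saves nothing here (then $\varphi(2)-1=0$), so one must analyse $\Z[(C_2)^2]$-lattices directly, exploiting the three torsion-freeness conditions coming from the three involutions and controlling the torsion in the auxiliary quotient modules — this is where most of the bookkeeping lies. Second, and beyond the reach of the arguments above, lies the general conjecture: arbitrary holonomy that is at least $3$-generated and of order divisible by both $2$ and $3$, where neither the dihedral reduction nor the dimension count applies, and the problem remains open.
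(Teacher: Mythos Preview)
The statement is the Dekimpe--Penninckx conjecture, which the paper does \emph{not} prove; it establishes only the partial cases of cyclic holonomy (Theorem~A), holonomy of order coprime to $6$ (Theorem~B), and $2$-generated holonomy (Theorem~C). Your proposal targets exactly these cases and, like the paper, explicitly leaves the general conjecture open, so the comparison is case by case.

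Your route to the cyclic case is genuinely different from the paper's. The paper splits into two subcases: if $m$ has a prime factor $p>3$ it restricts to $C_p$ and uses that a $\Z$-irreducible faithful $\Z C_p$-lattice is an ideal in $\Z[\zeta_p]$, giving $rk_{C_m}(\Z^n)\le n-3$ directly; if $m$ is a $\{2,3\}$-number it invokes torsion-freeness to show $(\Z^n)^{C_m}\neq0$, builds a surjection $\Gamma\twoheadrightarrow\Z$ from a fixed coordinate, and applies the module bound to the kernel $\Z^{n-1}$. Your uniform approach via $a=\tilde g^{\,m}$ and the quotient $M=\Z^n/\Z a$, together with the $\zeta_m$-isotypic piece viewed over the Dedekind domain $\Z[\zeta_m]$, is valid in outline and arguably cleaner. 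However, the phrase ``after adjusting the lift one may take $\Z^n/\Z a$ torsion-free'' hides a real gap: replacing $\tilde g$ by $\tilde g\cdot t$ shifts $a$ only by $Nt=(1+g+\cdots+g^{m-1})t$, and you have not shown $a$ can always be made primitive. When $\varphi(m)\ge4$ your slack does absorb a stray cyclic summand, but for $m\in\{3,4,6\}$ (where $\varphi(m)=2$) the extra term pushes your bound to $n$ rather than $n-1$, and then the reduction for $2$-generated holonomy no longer yields $d(\Gamma)\le n$. You need either the paper's input that torsion-freeness forces a genuine trivial $\Z$-summand in $\Z^n$, or a direct argument that $a$ can be taken primitive in these small cases.

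For $2$-generated holonomy your reduction to a cyclic subgroup via an element of order $>2$ is exactly the paper's argument. For the residual $(C_2)^2$ case the paper simply cites Dekimpe--Penninckx's theorem on elementary abelian holonomy; you flag it as requiring a separate lattice analysis but do not carry it out. For $|G|$ coprime to $6$ your sketch (``roughly $d(G)\le n/4$'') points at the right phenomenon but is not a proof: bounding $d(G)$ from a faithful $n$-dimensional representation needs precisely the Sylow-by-Sylow input the paper uses, namely Guralnick's inequality $d(G)\le\max_i d(P_i)+1$ together with the explicit bounds on $d(P_j)$ and $rk_{P_j}(\Z^n)$ from Adem--Dekimpe--Petrosyan--Putrycz, after which the conclusion is an arithmetic check.
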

The conjecture was solved for some special cases. 
For example, the conjecture is true if the holonomy group is an odd prime $p$-group (see \cite{nan}), or the holonomy group is an elementary abelian $p$-group (see \cite{nm gen bieb}). 
On the other hand, by a computer program namely CARAT, it has been checked that the conjecture is true if the Bieberbach group has dimension less than 7 (see \cite{carat}). 

There is a connection between the number of generators of Bieberbach group and the number of generators of a finite group that can act freely on an $n$-torus (see \cite{vieira}). Let $G$ be a finite group. If $G$ acts freely on an $n$-torus $T^n$, the quotient space $T^n/G$ is a manifold $M$ and we get a short exact sequence as below,
\[
\begin{tikzpicture}[node distance=2cm, auto]
\node (GA) {$\pi_1(M)$};
\node (G) [right of=GA] {$G$};
\node (I) [right of=G] {$1$};
\node (Z) [left of=GA] {$\pi_1(T^n)$};
\node (O) [left of=Z] {$0$};
\draw[->] (GA) to node {} (G);
\draw[->] (G) to node {}(I);
\draw[->] (O) to node {}(Z);
\draw[->] (Z) to node {}(GA);	 
\end{tikzpicture}
\]
where $\pi_1(M)$ is an $n$-dimensional Bieberbach group. Hence if $\pi_1(M)$ can be generated by $n$ elements, then the minimal number of generators of $G$ should not be larger than $n$. For instance, we know that $(\Z/2\Z)^{n+1}$ cannot act freely on $T^n$ for $n\geq1$. (see \cite{nm gen bieb},\cite{vieira}). 

Let $G$ be a group and $M$ be a $\mathbb{Z}G$-module. Throughout this paper, we denote $d(G)$ to be the minimal number of generators of $G$ and denote $rk_G(M)$ to be the minimal number of generators of $M$ as a $\mathbb{Z}G$-module.
Our paper is divided into several sections. In Section 2, we give some basic definitions and some related properties of crystallographic groups. 
In Section 3, we discuss the number of generators of $\Z C_m$-module, where $C_m$ is a cyclic group of order $m$. In Section 4, we present our three main theorems. The below three theorems are our main results.

\begin{thma}{\normalfont Let $\Gamma$ be an $n$-dimensional crystallographic group with holonomy group isomorphic to $C_m=\langle g|g^m=1\rangle$ where $m\geq 3$.

($i$) If $m$ is divisible by prime larger than 3, then $d(\Gamma)\leq n-2$.

($ii$) If $m$ is not divisible by prime larger than 3 and $\Gamma$ is torsion-free, then $d(\Gamma)\leq n-1$.}
\end{thma}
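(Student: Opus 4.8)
The plan is to realise $\Gamma$ as generated by a single lift of a generator of $C_m$ together with a $\Z C_m$-generating set of the translation lattice, and then to feed in the estimates of Section~3. Write the defining extension as $0\to M\to\Gamma\to C_m\to1$ with quotient map $\pi\colon\Gamma\to C_m$, where $M=\Gamma\cap(\R^n\times I)\cong\Z^n$ and conjugation makes $M$ a $\Z C_m$-module; by the Bieberbach theorems this module is \emph{faithful} of $\Z$-rank $n$. Fix $\gamma\in\Gamma$ with $\pi(\gamma)=g$ and put $t_0:=\gamma^m\in M$; as $\gamma$ commutes with $\gamma^m$ we have $g\cdot t_0=t_0$, so $\Z t_0$ is a $\Z C_m$-submodule of $M$ contained in $M^{C_m}$ (which is saturated in $M$, since $M/M^{C_m}\cong(g-1)M$ is torsion-free). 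The elementary observation is
\[
d(\Gamma)\ \le\ 1+rk_{C_m}(M/\Z t_0)\,,\qquad\text{and in particular}\qquad d(\Gamma)\ \le\ 1+rk_{C_m}(M)\,.
\]
Indeed, if $\bar m_1,\dots,\bar m_r$ generate $M/\Z t_0$ over $\Z C_m$ and $m_1,\dots,m_r\in M$ are lifts, then $\langle\gamma,m_1,\dots,m_r\rangle$ contains $\gamma^m=t_0$ and all conjugates $\gamma^j m_i\gamma^{-j}=g^j\cdot m_i$, hence contains $\Z t_0+\sum_i\Z C_m\,m_i=M$, and therefore equals $\Gamma$. Everything is thus reduced to bounding $rk_{C_m}(M)$ by $n-3$ for $(i)$ and $rk_{C_m}(M/\Z t_0)$ by $n-2$ for $(ii)$.

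For part $(i)$ no torsion-freeness is needed. Since $M$ is faithful, $g$ has order $m$, and $p\mid m$ for some prime $p>3$; as the order of $g$ is the least common multiple of its orders on the irreducible rational constituents of $M\otimes\Q$, some such constituent has $g$ acting with order $d$ where $p\mid d\mid m$, so this constituent has $\Q$-dimension $\varphi(d)\ge\varphi(p)=p-1\ge4$. By the results of Section~3 the presence of such a constituent forces $rk_{C_m}(M)\le n-3$, whence $d(\Gamma)\le n-2$.

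For part $(ii)$ we have $m=2^a3^b\ge3$ and $\Gamma$ torsion-free. Then $t_0\ne0$, otherwise $\gamma$ has order $m$. Moreover, for each prime $p\mid m$ the preimage $H_p:=\pi^{-1}(\langle g^{m/p}\rangle)$ is torsion-free, and a lift $\gamma^{m/p}v$ ($v\in M$) of the generator $g^{m/p}$ has $p$-th power $t_0+N_pv$ with $N_p=1+g^{m/p}+\cdots+g^{(p-1)m/p}$; torsion-freeness of $H_p$ therefore forces $t_0\notin N_pM$, and since $N_p$ acts as multiplication by $p$ on $M^{C_m}$ this gives $t_0\notin pM^{C_m}$ for every $p\mid m$. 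Writing $t_0=ku$ with $u$ primitive in $M^{C_m}$, we get $\gcd(k,m)=1$, so the torsion subgroup of $M/\Z t_0$ is cyclic of order prime to $m$. On the other hand $M/\Z t_0$ has free rank $n-1$ and, since $\Z t_0\subseteq M^{C_m}$, it still has an irreducible rational constituent on which $g$ has some order $d\ge3$ (one exists because $m\ge3$), so of dimension $\varphi(d)\ge2$. By the results of Section~3 the prime-to-$m$ torsion costs no extra $\Z C_m$-generator and such a constituent lowers the count, yielding $rk_{C_m}(M/\Z t_0)\le(n-1)-(\varphi(d)-1)\le n-2$, hence $d(\Gamma)\le n-1$.

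The substance of the proof thus lies in the module estimates of Section~3 --- quantifying how few $\Z C_m$-generators a lattice (or a module with torsion of order prime to $m$) requires, in terms of its $\Z$-rank and its rational decomposition, and in particular how much a generator of a faithful constituent can absorb of the trivial part; I expect this to be the main obstacle. The group-theoretic ingredients used here are only the faithfulness of the holonomy action and the standard fact that $\Gamma$ is torsion-free precisely when the preimages of the prime-order subgroups of $C_m$ are.
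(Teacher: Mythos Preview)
For part~(i) your argument coincides with the paper's: both use $d(\Gamma)\le 1+rk_{C_m}(M)$ and invoke Proposition~\ref{rank c_m}(i).

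For part~(ii) you take a different route. The paper builds a surjection $f\colon\Gamma\to\Z$ whose kernel is a rank-$(n-1)$ \emph{sublattice} of $M$, verifies that conjugation makes it a faithful $\Z C_m$-lattice, and applies Proposition~\ref{rank c_m} to that lattice directly. You instead pass to the \emph{quotient} $M/\Z t_0$ with $t_0=\gamma^m$; your reduction $d(\Gamma)\le 1+rk_{C_m}(M/\Z t_0)$ and your torsion-freeness analysis showing $t_0=ku$ with $u$ primitive in $M^{C_m}$ and $\gcd(k,m)=1$ are correct and pleasant.

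The gap is the final module estimate. Section~3 treats only \emph{lattices}; neither Lemma~\ref{rank c_p} nor Proposition~\ref{rank c_m} says anything about modules with torsion, so your appeal to Section~3 for the claim that ``prime-to-$m$ torsion costs no extra $\Z C_m$-generator'' is not backed up. The claim is not automatic: for any $k>1$ the module $\Z/k\Z\oplus\Z C_m$ is \emph{not} cyclic over $\Z C_m$, so torsion can genuinely cost a generator. Your situation \emph{can} be rescued, because the efficient block isolated in the proof of Proposition~\ref{rank c_m}(ii) is $\Z[\zeta_d]$ with $d\in\{3,4\}$, and $(1,1)$ generates $\Z/k\Z\oplus\Z[\zeta_d]$ over $\Z C_m$: the annihilator of $1\in\Z[\zeta_d]$ contains $\Phi_d(g)$, whose augmentation $\Phi_d(1)\in\{2,3\}$ divides $m$ and is therefore a unit modulo $k$, letting the single generator sweep out the $\Z/k\Z$ factor. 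But this extra lemma is yours to supply --- Section~3 as written does not provide it --- whereas the paper's approach sidesteps the whole issue by never leaving the category of lattices.
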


 The idea of the proof of Theorem A($i$) is to consider $\Gamma\cap (\mathbb{R}^n\times I)$ as a $\mathbb{Z}C_p$-module where $p$ is prime larger than 3. We use the module structure to reduce the number of generators. For Theorem A($ii$), we construct a surjective homomorphism from $\Gamma$ to $\Z$. Then by studying how $\Z$ acts on the kernel of the homomorphism, we can eliminate some redundant generators. 
 
 By Theorem A, we get two corollaries. One shows that a general $n$-dimensional Bieberbach group can be generated by $2n$ elements. The other corollary shows an $n$-dimensional Bieberbach group with a simple group as holonomy group can be generated by $n-1$ elements.

\begin{thmb}{\normalfont Let $\Gamma$ be an $n$-dimensional crystallographic group with holonomy group isomorphic to a finite group $G$, where the order of $G$ is not divisible by 2 or 3. Then $d(\Gamma)\leq n$.}
\end{thmb}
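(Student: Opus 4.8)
The plan is to reduce, via the defining exact sequence, to a bound on module generators, and then to establish that bound using the hypothesis $\gcd(|G|,6)=1$ (with Theorem~A serving as the cyclic model case). Write $L:=\Gamma\cap(\R^n\times I)\cong\Z^n$ for the translation lattice, so that $0\to L\to\Gamma\xrightarrow{\pi}G\to 1$ and, by the Bieberbach theorems, $L$ is a \emph{faithful} $\Z G$-module. First I would record the elementary estimate
\[
d(\Gamma)\ \le\ d(G)+rk_G(L).
\]
Indeed, if we pick $d(G)$ elements of $\Gamma$ whose images generate $G$ and adjoin $rk_G(L)$ elements of $L$ generating it as a $\Z G$-module, the resulting set generates $\Gamma$: the lifts realise, by conjugation (which on the abelian group $L$ factors through $G$), every $\Z G$-translation, so the subgroup they generate already contains all of $L$ and surjects onto $G$. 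Thus it suffices to prove $d(G)+rk_G(L)\le n$.

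Next I would dispose of the cyclic case and isolate the one arithmetic fact I need. If $G$ is cyclic, then $G=1$ (so $\Gamma\cong\Z^n$ and $d(\Gamma)=n$) or $|G|\ge 5$, in which case $|G|$ is divisible by a prime $>3$ and Theorem~A($i$) gives $d(\Gamma)\le n-2$. In general, decompose $V:=L\otimes_\Z\Q=V^G\oplus W$, with $W$ the $G$-invariant complement satisfying $W^G=0$, and set $a:=\dim_\Q V^G$; then $\dim_\Q W=n-a$ and, since $V$ is faithful and $V^G$ trivial, $W$ is a faithful $\Q G$-module with no trivial summand. The fact I would extract from $\gcd(|G|,6)=1$ is: \emph{if $G\ne 1$ then $\dim_\Q W\ge 4$}. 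This is because a finite group of order coprime to $6$ with a faithful rational representation of dimension $\le 3$ must be trivial (it would be a finite subgroup of $GL_3(\Z)$, and every finite subgroup of $GL_n(\Z)$ with $n\le 3$ has order of the shape $2^a3^b$), whereas the smallest nontrivial rational irreducible of a group of order coprime to $6$ has dimension at least $\varphi(5)=4$.

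It then remains to bound the two terms. Writing $W=\bigoplus_i m_iW_i$ with the $W_i$ distinct nontrivial rational irreducibles (each of dimension $\ge 4$), one obtains $rk_G(L)\le\max\bigl(a,\ \max_i m_i\bigr)+c$ for a small absolute constant $c$, by choosing $\Z G$-module generators of $L$ one isotypic component at a time (the constant absorbing the bounded gap between the integral rank and the rational multiplicities, which is controlled by the $\Z C_m$-module estimates of Section~3). On the other hand $d(G)\le\tfrac14\dim_\Q W$: for abelian $G$ this is a direct computation using $\varphi(p^k)\ge p-1\ge 4$, and in general it reduces to the same statement for the Sylow subgroups of $G$ acting faithfully and without trivial summand. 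Combining these: if the maximum defining $rk_G(L)$ occurs on $V^G$, then $d(G)+rk_G(L)\le\tfrac14(n-a)+a+c=n-\tfrac34(n-a)+c\le n$ because $n-a=\dim_\Q W\ge 4$ (and $c\le 3$ suffices); if it occurs on a nontrivial $W_{i_0}$, then $m_{i_0}\le\tfrac14\dim_\Q W$ since $\dim_\Q W\ge m_{i_0}\dim_\Q W_{i_0}\ge 4m_{i_0}$, so $d(G)+rk_G(L)\le\tfrac12\dim_\Q W+c\le n$. Either way $d(\Gamma)\le n$.

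The hard part will be the inequality $d(G)\le\tfrac14\dim_\Q W$ for non-abelian $G$: the abelian case is immediate, but in general one must pass to the Sylow subgroups and verify that a $p$-group with $p\ge 5$ acting faithfully and with no trivial summand on a rational module of dimension $m$ is generated by at most $m/4$ elements, and then recombine this Sylow data into a bound on $d(G)$ itself. A secondary point is keeping the passage from the rational multiplicities $m_i$ to the integral quantity $rk_G(L)$ under genuine quantitative control (an additive error $\le 3$ is all that is affordable), which is precisely where the module-generation results of Section~3 are used.
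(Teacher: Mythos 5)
Your reduction $d(\Gamma)\le d(G)+rk_G(L)$, the cyclic case via Theorem~A($i$), and the observation that the non-fixed part $W$ of $L\otimes\Q$ has $\dim_\Q W\ge 4$ (since any nontrivial rational irreducible of a group of order coprime to $6$ restricts nontrivially to some $C_p$ with $p\ge 5$, forcing dimension $\ge p-1\ge 4$) are all sound, and the last point is essentially the paper's closing estimate $rk(\mathbb{Z}^n)^{P_j}\le n-4$. But the two inequalities your argument actually rests on are not established, and each hides a genuine difficulty. First, the claim $rk_G(L)\le\max\bigl(a,\max_i m_i\bigr)+c$ for a small absolute constant $c$ is unjustified: the lattice $L$ is \emph{not} the direct sum of its intersections with the rational isotypic components (only a finite-index sublattice is), so you cannot choose $\mathbb{Z}G$-generators ``one isotypic component at a time,'' and the Section~3 estimates you invoke concern only cyclic groups and have the shape $rk_{C_m}(\mathbb{Z}^n)\le n-3$, not ``rational multiplicity plus a bounded error.'' Controlling the integral number of generators is precisely the hard point; the paper sidesteps it by restricting the module to a single Sylow subgroup $P_j$ and quoting \cite[Proposition 2.5]{nan}, which bounds $rk_{P_j}(\mathbb{Z}^n)$ in terms of $n-rk(\mathbb{Z}^n)^{P_j}$ and $p_j$.

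Second, the inequality $d(G)\le\tfrac14\dim_\Q W$ for non-abelian $G$, which you yourself flag as ``the hard part,'' is left unproven. The tools available (Guralnick's $d(G)\le\max_i d(P_i)+1$ together with \cite[Proposition 2.2]{nan} for the Sylow subgroup) yield only $d(G)\le\tfrac14\dim_\Q W+1$, and once you insert that extra $+1$ alongside your constant $c$, your final arithmetic no longer closes in low dimensions: in the case where the maximum is attained on a nontrivial component you get $\tfrac12\dim_\Q W+c(+1)$, which already exceeds $n$ for $n=4$, $a=0$, $c=3$. The paper's proof avoids both issues by working throughout with one Sylow subgroup $P_j$ ($p_j\ge 5$): it combines $d(G)\le\frac{n-rk(\mathbb{Z}^n)^{P_j}}{p_j-1}+1$ with the lattice bound of \cite[Proposition 2.5]{nan}, reduces everything to showing $rk(\mathbb{Z}^n)^{P_j}\le n-2$, and gets $rk(\mathbb{Z}^n)^{P_j}\le n-p_j+1\le n-4$ from the degrees of faithful indecomposable $C_{p_j}$-representations. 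To repair your proposal you would need to either prove your two asserted bounds (with explicit constants that survive $n=4,5$) or fall back on the Sylow-subgroup route, at which point you have reproduced the paper's argument.
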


The idea of the proof of Theorem B is to apply results from \cite{gen of fin gp} to get a relation between the number of generators of the finite group $G$ and its Sylow $p$-subgroups.
Then we apply results from \cite{nan} to prove Theorem B. 

\begin{thmc}{\normalfont Let $\Gamma$ be an $n$-dimensional Bieberbach group with $2$-generated holonomy group. Then $d(\Gamma)\leq n$}.
\end{thmc}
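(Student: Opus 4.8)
The plan is to reduce Theorem C to Theorem A together with standard results on crystallographic groups. Let $\Gamma$ be an $n$-dimensional Bieberbach group whose holonomy group $G$ is $2$-generated. The case that is genuinely new is when $G$ is \emph{exactly} $2$-generated but not cyclic (the cyclic case is handled by Theorem A($ii$) if $|G|\geq 3$, and the cases $|G|\leq 2$ are classical, since then $\Gamma$ is $\Z^n$ or a Klein-bottle-type group which needs at most $n$ generators). So assume $d(G)=2$, say $G=\langle \bar g_1,\bar g_2\rangle$ with $\bar g_1$ of order $m\geq 2$.

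\smallskip
First I would pass to the subgroup $\Delta\leq\Gamma$ which is the preimage of the cyclic subgroup $C=\langle\bar g_1\rangle\leq G$ under the projection $\Gamma\twoheadrightarrow G$. This $\Delta$ is itself a crystallographic group of dimension $n$ (same translation lattice $\Gamma\cap(\R^n\times I)\cong\Z^n$), it is torsion-free since $\Gamma$ is, hence an $n$-dimensional Bieberbach group, and its holonomy group is the cyclic group $C$ of order $m$. Moreover $[\Gamma:\Delta]=[G:C]$ is finite and $\Gamma$ is generated by $\Delta$ together with a single coset representative $g_2$ of $\bar g_2$ modulo $\Delta$ — because $G=\langle\bar g_1,\bar g_2\rangle$ means every element of $\Gamma$ is, modulo $\Gamma\cap(\R^n\times I)\subseteq\Delta$, a word in $\bar g_1,\bar g_2$, and the $\bar g_1$-part already lives in $\Delta$. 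Thus $d(\Gamma)\leq d(\Delta)+1$.

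\smallskip
Now apply Theorem A to $\Delta$. If $m\geq 3$: part ($ii$) gives $d(\Delta)\leq n-1$ when $m$ is $\{2,3\}$-smooth, and part ($i$) gives the even stronger $d(\Delta)\leq n-2$ otherwise; either way $d(\Delta)\leq n-1$, so $d(\Gamma)\leq n$, as desired. The remaining subcase is $m=2$, i.e. $\bar g_1$ has order $2$; here one can instead choose $\bar g_1$ to be an element of $G$ of the largest possible order among a $2$-element generating set — or note that if $G$ has \emph{any} element of order $\geq 3$ we may take that as $\bar g_1$ and arrange a $2$-generating set containing it (always possible: if $\langle x,y\rangle=G$ and $x$ has order $2$ but some $z\in G$ has order $\geq3$, then... this needs care). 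The cleanest fix is: if every element of $G$ has order $\leq 2$ then $G$ is elementary abelian of rank $\leq 2$, and the conjecture for elementary abelian holonomy is already known (cited in the introduction, \cite{nm gen bieb}); otherwise pick $\bar g_1$ of order $m\geq 3$ lying in a $2$-generating set and run the argument above.

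\smallskip
The main obstacle I anticipate is precisely this last point: ensuring that when $d(G)=2$ and $G$ is not elementary abelian, one can choose a generating pair $\{\bar g_1,\bar g_2\}$ with $\mathrm{ord}(\bar g_1)\geq 3$, so that Theorem A is applicable to the index-$[G:\langle\bar g_1\rangle]$ subgroup $\Delta$. For many groups (dihedral, quaternion, simple — the examples flagged in the abstract) this is immediate, but a uniform argument requires a short lemma in finite group theory, e.g. that a $2$-generated non-elementary-abelian group always has a generating pair in which at least one generator has order $\geq 3$. I would prove this by contradiction: if every generating pair consists of involutions, then $G$ is generated by involutions and every $2$-generated subgroup $\langle x,y\rangle$ with $x^2=y^2=1$ is a dihedral group, and a case analysis on the order of $xy$ forces $G$ itself to be dihedral or elementary abelian — in the dihedral case $D_{2k}$ with $k\geq 3$ we do have the generating pair $\{r, s\}$ with $r$ of order $k\geq 3$, contradiction, and in the elementary abelian case we are in the already-known situation. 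Once this lemma is in hand, the reduction $d(\Gamma)\leq d(\Delta)+1\leq (n-1)+1=n$ via Theorem A completes the proof.
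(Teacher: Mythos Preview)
Your proposal is correct and follows essentially the same route as the paper: pass to the Bieberbach subgroup $\Delta\leq\Gamma$ with cyclic holonomy generated by one of the two generators (this is exactly the paper's $\Gamma'=\langle\iota(e_1),\dots,\iota(e_n),\alpha\rangle$), apply Theorem~A to get $d(\Delta)\leq n-1$, then adjoin a lift of the second generator to obtain $d(\Gamma)\leq n$.

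The only difference is in the handling of the case where both chosen generators are involutions. You set this up as a group-theoretic lemma proved by contradiction via the dihedral structure of $\langle x,y\rangle$; the paper does it more directly: given $x^2=y^2=1$, simply replace the generating pair $\{x,y\}$ by $\{xy,y\}$. Either $xy$ has order $k\geq 3$, in which case Theorem~A applies to $\Gamma''=\langle\iota(e_1),\dots,\iota(e_n),\alpha\beta\rangle$, or $xy$ has order $\leq 2$, forcing $G\cong C_2$ or $C_2\times C_2$, which falls under the known elementary-abelian case. Your contradiction argument ultimately produces this same pair $\{r,s\}=\{xy,y\}$, just with extra scaffolding.
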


The idea of the proof of Theorem C is to consider a Bieberbach subgroup with cyclic holonomy group. 
Then we apply Theorem A to get the desired bound for generators of the Bieberbach group $\Gamma$.

\subsection{Acknowledgment}

I would like to thank my supervisor Dr. Nansen Petrosyan for his help and guidance.

\section{Background}
	In this section, we recall some properties of crystallographic group from \cite{charlap} and \cite{cry}. Let \(\Gamma\) be an \(n\)-dimensional crystallographic group. By The First Bieberbach Theorem, \cite[Theorem 2.1]{cry}, $\Gamma\cap(\mathbb{R}^n\times I)$ is isomorphic to $\mathbb{Z}^n$ and it is the maximal abelian subgroup with finite index, where $I$ is the identity element in the orthogonal group. Therefore $\Gamma$ can be expressed as the short exact sequence
	\begin{equation}
	\begin{tikzpicture}[node distance=1.5cm, auto]
	\node (GA) {$\Gamma$};
	\node (G) [right of=GA] {$G$};
	\node (I) [right of=G] {$1$};
	\node (Z) [left of=GA] {$\mathbb{Z}^n$};
	\node (O) [left of=Z] {$0$};
	\draw[->] (GA) to node {$p$} (G);
	\draw[->] (G) to node {}(I);
	\draw[->] (O) to node {}(Z);
	\draw[->] (Z) to node {$\iota$}(GA);	 
	\end{tikzpicture}
	\end{equation}
	where $G$ is a finite group, $\iota:\mathbb{Z}^n\hookrightarrow\Gamma$ is an inclusion map which maps $e_i$ to $(e_i,I)$ where $e_1,...,e_n$ are the standard basis of $\mathbb{Z}^n$ and \(p:\Gamma\rightarrow G\) is a projection map which maps $(a,A)$ to $A$. Given such a short exact sequence, it will induce a representation $\rho:G\rightarrow GL_n(\mathbb{Z})$ given by $\rho(g)x=\bar{g}\iota(x)\bar{g}^{-1}$, where $x\in\mathbb{Z}^n$ and $\bar{g}$ is chosen arbitrarily such that $p(\bar{g})=g$. In this case, we call the group $G$ to be the {\it holonomy group} and the representation $\rho$ to be the {\it holonomy representation} of $\Gamma$. It is well known that $\rho$ is a faithful representation (see \cite[Chapter 2]{cry}). 
	
	Now we are going to introduce the algebraic definition for crystallographic groups, which is equivalent to the geometric definition of crystallographic groups (see \cite[Theorem 2.2]{cry}). We say $\Gamma$ is an {\it $n$-dimensional crystallographic group} if it can be expressed as the below short exact sequence
	\begin{equation}
	\begin{tikzpicture}[node distance=1.5cm, auto]
	\node (GA) {$\Gamma$};
	\node (G) [right of=GA] {$G$};
	\node (I) [right of=G] {$1$};
	\node (Z) [left of=GA] {$\mathbb{Z}^n$};
	\node (O) [left of=Z] {$0$};
	\draw[->] (GA) to node {} (G);
	\draw[->] (G) to node {}(I);
	\draw[->] (O) to node {}(Z);
	\draw[->] (Z) to node {}(GA);	 
	\end{tikzpicture}
	\end{equation}
	where $G$ is finite group and the induce representation $\rho:G\rightarrow GL_n(\mathbb{Z})$ is a faithful representation. 
	Given an $n$-dimensional crystallographic group $\Gamma$, with holonomy group $G$. Every element $\gamma\in\Gamma$ can be expressed as $(a,g)$ where $a\in\mathbb{R}^n$ and $g\in G$. 
	The operation in $\Gamma$ is given by $(a_1,g_1)(a_2,g_2)=\left(a_1+\rho(g_1)(a_2),g_1g_2\right)$, where $(a,g),(b,h)\in\Gamma$ and $\rho$ is the holonomy representation.
	Notice that $\Gamma$ will induce the holonomy representation $\rho:G\rightarrow GL_n(\mathbb{Z})$. 
	Therefore we can consider $\Gamma\cap(\mathbb{R}^n\times I)\cong\mathbb{Z}^n$ as a $\mathbb{Z}G$-module. 
	We denote $rk_G(\mathbb{Z}^n)$ to be the minimal number of generators of $\mathbb{Z}^n$ as a $\mathbb{Z}G$-module. 
	In particular, if $G$ is a cyclic group with generator $g$ and let $\rho:G\rightarrow GL_n(\mathbb{Z})$ where $g\mapsto M\in GL_n(\mathbb{Z})$ be its matrix holonomy representation. 
	For convenience, we denote element $(a,g)\in\Gamma$ to be $(a,M)$ and denote the $\Z G$-module $\mathbb{Z}^n$ to be $\mathbb{Z}^n_M$ to specify that the $G$-action is given by the matrix $M$.
	We will denote $I_n$ to be the identity matrix of dimension $n$ and $C_m$ to be a cyclic group of order $m$.
	\begin{remk}{\label{rmk gen}}
		{\normalfont Let $\Gamma$ be an $n$-dimensional crystallographic group with holonomy group $G$, where $G$ is generated by $m$ elements namely $a_1,...,a_m$. Then by sequence (1), we have the following two observations,
		
		($i$) $d(\Gamma)\leq rk_G(\mathbb{Z}^n)+d(G)$.
		
		($ii$) $\{\iota(e_1),...,\iota(e_n),\alpha_1,...,\alpha_m\}$ can be a generating set of $\Gamma$ where $e_1,...,e_n$ are the standard basis of $\mathbb{Z}^n$ and $\alpha_i$ is chosen arbitrarily such that $p(\alpha_i)=a_i$ for all $i=1,...,m$.}
	\end{remk}
	
	\begin{defn}
		{\normalfont Let $G$ be a group, $M$ be a $\mathbb{Z}G$-module and $\rho:G\rightarrow GL_m(\mathbb{Z})$ be the representation correspond to the $\mathbb{Z}G$-module $M$.
		
		($i$) $N$ is a {\it submodule} of $M$ if $N$ is a subgroup of $M$ which is closed under the action of ring elements.
		
		($ii$) $M$ is decomposable if $M$ is the direct sum of submodules. $M$ is indecomposable if $M$ is not decomposable.
		
		($iii$) $M$ is {\it $\mathbb{Z}$-reducible} if there exists a matrix $N\in GL_m(\Z)$ such that $N\rho(g) N^{-1}=\begin{pmatrix}
		* & * \\
		0 & *
		\end{pmatrix}$ for $g\in G$.
		$M$ is {\it $\mathbb{Z}$-irreducible} if $M$ is not $\mathbb{Z}$-reducible.}
	\end{defn}
	Now, we are going to give a short introduction to the properties of holonomy representation. Let $M_1,...,M_k$ be square matrices with entries in $\mathbb{Z}$, we denote $tri(M_1,...,M_k)$ to be matrix of form as below,
	\[tri(M_1,...,M_k):=\begin{pmatrix}
	M_1 & & & *\\
	& M_2 & & \\
	& & \ddots & \\
	\mbox{\Huge 0} & & & M_k \\
	\end{pmatrix}\]
	Let $\rho:C_m\rightarrow GL_n(\mathbb{Z})$ be a faithful representation. 
	Since $\rho$ is defined up to isomorphism, we are able to conjugate it by a suitable invertible matrix and assume  $\rho(g)=tri(A_1,...,A_t)$ for some $t\in\mathbb{N}$ and $A_1,...,A_t$ are square matrices such that $\Z^{dim(A_1)}_{A_1}$,..., $\Z^{dim(A_t)}_{A_t}$ are $\Z$-irreducible modules and $\sum_{j=1}^{t}dim(A_j)=n$.
	\begin{remk}{\label{rmk ord}}
		{\normalfont Let $M=tri(A_1,...,A_t)$ where $A_1,...,A_t$ are square matrices. Denote the order of $A_i$ to be $a_i$ for $i=1,...,t$ and $m$ to be the order of $M$. Then the least common multiple of $a_1,...,a_t$ equals to $m$. In particular, $m$ is divisible by $a_i$ for $i=1,...,t$.}
	\end{remk}

	\section{Generators of $\Z C_m$-module}
	
	Let $\Gamma$ be an $n$-dimensional crystallographic group with holonomy group isomorphic to $C_m$. We can consider $\Gamma\cap(\mathbb{R}^n\times I)\cong\mathbb{Z}^n$ as a $\mathbb{Z}C_m$-module. Since we can restrict the $C_m$-action to be a $C_k$-action as long as $m$ is divisible by $k$, we can also view $\Z^n$ as a $\Z C_k$-module. It is clear that $rk_{C_m}(\Z^n)\leq rk_{C_k}(\Z^n)$. The below lemma and proposition are on the number of generators of $\Z C_m$-module.
	
	\begin{lem}{\label{rank c_p}}
		{\normalfont Let $\rho:C_p\rightarrow GL_n(\Z)$ be a faithful representation and $\Z^n$ be the correspondence $\Z C_p$-module, where $p$ is prime. Then $rk_{C_p}(\Z^n)\leq n-p+a$, where $a=2$ if $p\leq 19$, otherwise $a=3$.}
	\end{lem}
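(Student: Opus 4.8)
The plan is to work inside the block-triangular normal form for $\rho$ set up above, classify the $\Z$-irreducible pieces that can occur, and then run a short counting argument. First, conjugating $\rho$ if necessary I may assume $\rho(g)=tri(A_1,\dots,A_t)$ with each $\Z^{dim(A_j)}_{A_j}$ a $\Z$-irreducible $\Z C_p$-module; equivalently there is a filtration $0=V_0\subseteq V_1\subseteq\dots\subseteq V_t=\Z^n$ by $C_p$-invariant direct summands with $V_j/V_{j-1}\cong\Z^{dim(A_j)}_{A_j}$. Since $\rho$ is faithful, $\rho(g)$ has order $p$, so by Remark~\ref{rmk ord} at least one $A_j$ has order $p$.

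Second, I would determine all $\Z$-irreducible $\Z C_p$-modules. If $L$ is one, then $\Q\otimes L$ is a simple $\Q C_p$-module --- otherwise a nonzero proper $\Q C_p$-submodule would meet $L$ in a proper nonzero $C_p$-invariant pure sublattice, contradicting $\Z$-irreducibility --- and the simple $\Q C_p$-modules are exactly the trivial module $\Q$ and $\Q(\zeta_p)$. Hence either $L\cong\Z$ is trivial (of $\Z$-rank $1$), or $L$ is a full lattice in $\Q(\zeta_p)$ with $g$ acting as multiplication by $\zeta_p$, that is, $L\cong\mathfrak a$ for some nonzero fractional ideal $\mathfrak a\subseteq\Q(\zeta_p)$ (of $\Z$-rank $p-1$); moreover the blocks of order $p$ are precisely those of the second type. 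Letting $c$ be the number of trivial blocks and $d$ the number of ideal blocks, the first paragraph gives $d\ge 1$, and clearly $n=c+(p-1)d$.

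Third, the generator count. A trivial block is $1$-generated over $\Z C_p$; for an ideal block $\mathfrak a$, the $C_p$-action factors through the surjection $\Z C_p\twoheadrightarrow\Z[\zeta_p]$, so generating $\mathfrak a$ over $\Z C_p$ is the same as generating it over the Dedekind domain $\Z[\zeta_p]$, whence $\mathfrak a$ needs at most $2$ generators, and at most $1$ when $\mathfrak a$ is principal. At this point I would invoke the classical fact that $\Q(\zeta_p)$ has class number $1$ precisely for the primes $p\le 19$ (the first exception being $p=23$), so that for $p\le 19$ every ideal block is $1$-generated. Since $\Z^n$ is an iterated extension of its blocks and the minimal number of generators is subadditive along short exact sequences of $\Z C_p$-modules, we get $rk_{C_p}(\Z^n)\le c+d$ when $p\le 19$ and $rk_{C_p}(\Z^n)\le c+2d$ for every prime $p$.

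Finally, substituting $n=c+(p-1)d$ gives $(n-p+2)-(c+d)=(p-2)(d-1)$ and $(n-p+3)-(c+2d)=(p-3)(d-1)$. The former is $\ge 0$ since $p\ge 2$ and $d\ge 1$, yielding $rk_{C_p}(\Z^n)\le n-p+2$ for $p\le 19$; the latter is $\ge 0$ since $d\ge 1$ and, in the only situation where one must fall back on the general bound, $p\ge 23>3$. Hence $rk_{C_p}(\Z^n)\le n-p+a$ with $a$ as stated. I do not expect a genuine obstacle here: the only external inputs are the (elementary) classification of $\Z$-irreducible $\Z C_p$-modules and the table of cyclotomic class numbers, and the threshold $p\le 19$ in the statement is exactly the range where $\Z[\zeta_p]$ is a principal ideal domain; the points that still need a little care are justifying the block-triangular reduction, the subadditivity of the number of generators along extensions, and confirming that the closing inequalities survive for the small primes $p=2,3$.
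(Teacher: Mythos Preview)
Your proposal is correct and follows essentially the same route as the paper: put $\rho(g)$ in block-triangular form with $\Z$-irreducible diagonal blocks, identify the nontrivial blocks with fractional ideals of $\Z[\zeta_p]$ (rank $p-1$), use class number $1$ for $p\le 19$ and the two-generator property of ideals in a Dedekind domain otherwise, and finish with subadditivity of $rk_{C_p}$ along the filtration. The only cosmetic difference is that the paper singles out a \emph{single} block $A_i$ of order $p$ and bounds the remaining $n-(p-1)$ coordinates trivially, obtaining $rk_{C_p}(\Z^n)\le (n-p+1)+rk_{C_p}(\Z^{p-1}_{A_i})$ directly, whereas you keep track of all $d$ ideal blocks and then relax the stronger inequality $rk_{C_p}(\Z^n)\le c+ad/ (a-1)\cdot\!$-type bound back to $n-p+a$ via the identities $(n-p+2)-(c+d)=(p-2)(d-1)$ and $(n-p+3)-(c+2d)=(p-3)(d-1)$; this extra bookkeeping is harmless and the conclusion is identical.
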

	\begin{proof}
		Let $g$ be the generator of $C_p$. Assume $\rho(g)=tri(A_1,...,A_k)$ where $\Z^{dim(A_1)}_{A_1}$,$\cdots$, $\Z^{dim(A_k)}_{A_k}$ are $\Z$-irreducible $\Z C_p$-modules. By Remark \ref{rmk ord}, there exists $i\in\{1,...,k\}$ such that $A_i$ has order $p$. By \cite[section 74]{repn} $A_i$ has dimension $p-1$ and the module $\Z^{dim(A_i)}_{A_i}$ is isomorphic to an ideal in $\Z[\zeta]$ where $\zeta$ is a primitive $p$-root of unity. If $p\leq 19$, by \cite[Section 29.1.3]{alg nm}, the class number of $\Z[\zeta]$ is 1. Therefore the module $\Z^{dim(A_i)}_{A_i}$ is a principle ideal and it is isomorphic to $\Z[\zeta]$. Hence $rk_{C_p}(\Z^{dim(A_i)}_{A_i})=1$. Now assume $p>19$. Since $\Z[\zeta]$ is a Dedekind domain. By \cite[Section 7.1-2]{alg nm}, every ideal in a Dedekind domain can be generated by two elements. Hence $rk_{C_p}(\Z^{dim(A_i)}_{A_i})\leq 2$. Therefore we have
		\[rk_{C_p}(\Z^n)\leq n-dim(A_i)+rk_{C_p}(\Z^{dim(A_i)}_{A_i})=n-p+1+rk_{C_p}(\Z^{dim(A_i)}_{A_i})\leq n-p+a\]
		where $a=2$ if $p\leq 19$, otherwise $a=3$.
	\end{proof}
	
	\begin{prop}{\label{rank c_m}}
		{\normalfont Let $\rho:C_m\rightarrow GL_n(\Z)$ be a faithful representation and $\Z^n$ be the correspondence $\Z C_m$-module of $\rho$, where $m\geq 3$.
			
		($i$) If $m$ is divisible by prime larger than 3, then $rk_{C_m}(\Z^n)\leq n-3$.
		
		($ii$) If $m$ is not divisible by prime larger than 3, then $rk_{C_m}(\Z^n)\leq n-1$.}
	\end{prop}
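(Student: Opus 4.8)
The plan is to reduce both parts to Lemma~\ref{rank c_p} by choosing a suitable prime $p$ dividing $m$ and restricting the $C_m$-action to the subgroup $C_p$. Write $\rho(g)=tri(A_1,\dots,A_t)$ with each $\Z^{dim(A_j)}_{A_j}$ a $\Z$-irreducible $\Z C_m$-module, so that $\sum_j dim(A_j)=n$. For part~($i$), let $p$ be a prime larger than $3$ dividing $m$. By Remark~\ref{rmk ord} the order of $M=\rho(g)$ is the least common multiple of the orders $a_j$ of the $A_j$, so $p$ divides $a_j$ for some $j$; restricting that block to the subgroup of $C_m$ of order $p$ gives a faithful $\Z C_p$-action on $\Z^{dim(A_j)}_{A_j}$, and by the argument inside Lemma~\ref{rank c_p} (the $\Z$-irreducible block of order $p$ has dimension $p-1\ge 4$ and rank at most $2$ over $\Z C_p$, hence over $\Z C_m$) we get $rk_{C_m}(\Z^{dim(A_j)}_{A_j})\le rk_{C_p}(\Z^{dim(A_j)}_{A_j})\le 2$. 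Since the other blocks contribute at most their dimension, $rk_{C_m}(\Z^n)\le (n-(p-1))+2=n-p+3\le n-2$; to sharpen this to $n-3$ I would note $p\ge 5$ already gives $n-p+3\le n-2$, so I need one extra generator saved, which I expect to come either from the fact that when $p=5$ one can check the cyclotomic block more carefully, or — more robustly — by observing that the holonomy representation being faithful forces $t\ge 2$ or forces a second $\Z$-irreducible block whose rank can be absorbed; this bookkeeping is the first place to be careful.

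For part~($ii$), $m$ is of the form $2^a 3^b$ with $m\ge 3$, so $m$ is divisible by $2$ or by $3$. If $3\mid m$, pick the block $A_j$ whose order is divisible by $3$ and restrict to $C_3$: the $\Z$-irreducible $\Z C_3$-module of order $3$ has dimension $2$ (it is $\Z[\zeta_3]$, which has class number $1$), hence rank $1$ over $\Z C_3$ and therefore over $\Z C_m$. This saves one generator: $rk_{C_m}(\Z^n)\le (n-2)+1=n-1$. If instead $m=2^a$ with $a\ge 2$, one needs a block of order $4$; the relevant $\Z$-irreducible $\Z C_4$-module is $\Z[i]$ (class number $1$), again of dimension $2$ and rank $1$, giving the same bound. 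The remaining case $m=2$ is excluded by the hypothesis $m\ge 3$.

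The key step in each part is locating, via Remark~\ref{rmk ord}, a single $\Z$-irreducible block on which the relevant prime (or prime power) acts faithfully, and then invoking the class-number-one computation from the proof of Lemma~\ref{rank c_p} for the small primes $3$ (and the prime power $4$) to get that block down to rank $1$, respectively the Dedekind two-generation bound for the large prime $p$ in part~($i$). I would organize the write-up as: (1) decompose $\rho(g)$ into $\Z$-irreducible blocks; (2) use Remark~\ref{rmk ord} to find the needed block; (3) bound that block's $\Z C_m$-rank by restriction to the cyclic subgroup of prime (or prime-power) order and the cyclotomic/Dedekind facts; (4) add up, using $rk_{C_m}(N_1\oplus N_2)\le rk_{C_m}(N_1)+rk_{C_m}(N_2)$ and $rk_{C_m}(\Z^k)\le k$ for the leftover blocks. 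The main obstacle I anticipate is squeezing part~($i$) from the easy bound $n-2$ down to the claimed $n-3$: this requires either a sharper analysis of the smallest cyclotomic case or an argument that the complement of the order-$p$ block cannot be trivial when $m$ has other prime factors, and getting that extra saved generator cleanly is where the real work lies.
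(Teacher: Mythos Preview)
Your plan for part~($ii$) is correct and is exactly what the paper does: restrict to $C_3$ (or to $C_4$ when $m$ is a power of $2$), locate a $\Z$-irreducible block on which the restriction is faithful, and use that $\Z[\zeta_3]$ and $\Z[i]$ have class number $1$ to get that block down to rank $1$.

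For part~($i$) your approach is also the paper's, but you are not using the full strength of Lemma~\ref{rank c_p}, and that is precisely the ``extra generator'' you say you cannot find. You bounded the cyclotomic block by rank $\le 2$ and obtained $n-p+3\le n-2$. But Lemma~\ref{rank c_p} says more: for $p\le 19$ the class number of $\Z[\zeta_p]$ is $1$, so the block is cyclic, i.e.\ has rank $1$, and the lemma gives $rk_{C_p}(\Z^n)\le n-p+2$. Hence for $5\le p\le 19$ you get $n-p+2\le n-3$, and for $p>19$ you get $n-p+3\le n-20<n-3$. No further case analysis, no second block, no ``complement cannot be trivial'' argument is needed; the sharpening from $n-2$ to $n-3$ is already encoded in the constant $a$ of Lemma~\ref{rank c_p}.

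In fact the paper does not decompose into blocks at all in part~($i$): it simply restricts the whole module $\Z^n$ to the subgroup $C_{p_t}$ (where $p_t\ge 5$ is the largest prime divisor of $m$), observes the restriction is still faithful, and applies Lemma~\ref{rank c_p} once to get $rk_{C_m}(\Z^n)\le rk_{C_{p_t}}(\Z^n)\le n-p_t+a\le n-3$. Your block-by-block bookkeeping is correct but unnecessary, and it obscured the fact that the lemma already does the work. One minor caution if you do keep the block argument: to ensure the restriction of a single block $A_j$ to $C_p$ is faithful you need $v_p(a_j)=v_p(m)$, not merely $p\mid a_j$; Remark~\ref{rmk ord} guarantees such a $j$ exists, but your phrasing should pick that one.
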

	\begin{proof}
		Let $m=p_1^{s_1}\cdots p_t^{s_t}$ be the prime decomposition of $m$ and assume $p_1<\cdots<p_t$. Let $g$ be the generator of $C_m$.
		
		($i$): Consider $H=\langle g^{m/p_t}\rangle\cong C_{p_t}$, a subgroup of $C_m$. We can view $\mathbb{Z}^n$ as a $\mathbb{Z}C_{p_t}$-module where the $C_{p_t}$-action is given by $\rho|_H$. Since $\rho|_H$ is a faithful representation, by Lemma \ref{rank c_p}, we have $rk_{C_{p_t}}(\Z^n)\leq n-p_t+a$. Since $m$ is divisible by prime larger than 3, we have $rk_{C_m}(\mathbb{Z}^n)\leq n-3$. 
		
		($ii$): We observe that $m$ is either divisible by 3 or 4. If $m$ is divisible by 3, we consider $\Z^n$ as $\Z C_3$-module. By Lemma \ref{rank c_p}, we have $rk_{C_3}(\Z^n)\leq n-1$. Hence $rk_{C_m}(\Z^n)\leq n-1$. Now we assume $m$ is divisible by 4. Consider $H'=\langle g^{m/4}\rangle\cong C_4$, a subgroup of $C_m$. We can view $\Z^n$ as a $\Z C_4$-module by restricting the $C_m$-action to a $C_4$-action, where the $C_4$-action is given by $\rho|_{H'}$. We assume $\rho|_{H'}(g^{m/4})=tri(M_1,...,M_k)$ where $\Z^{dim(M_1)}_{M_1}$,..., $\Z^{dim(M_k)}_{M_k}$ are $\Z$-irreducible $\Z C_4$-modules. By Remark \ref{rmk ord}, there exists $i\in\{1,...,k\}$ such that $M_i$ is a matrix of order 4. Let $\phi:C_4\rightarrow GL_n(\Z)$ be the corresponding representation of $\Z^{dim(M_i)}_{M_i}$. By \cite[Section 5]{nan c4}, there is only one faithful integral $\Z$-irreducible $C_4$-representation up to equivalence. Hence we assume $M_i$ is equivalent to $\begin{pmatrix}
		0 & 1\\-1 & 0
		\end{pmatrix}$. Let $y_1=(1,0)\in\Z^2$ and $y_2=(0,1)\in\Z^2$ be the standard basis of $\mathbb{Z}^2_{M_i}$. We have $\phi(g^{m/4})y_2=y_1$. Hence $\mathbb{Z}^2_{M_i}$ can be generated by $y_2$ as a $\mathbb{Z}C_4$-module. Since $rk_{C_4}(\Z^{m_i}_{M_i})\leq m_i-1$, we have $rk_{C_m}(\Z^n)\leq\sum_{z=1}^{k}rk_{C_4}(\Z^{m_z}_{M_z})\leq n-1$.
	\end{proof}
	
	The following Lemma is on the dimension of the fix point set of a cyclic holonomy representation.
	
	\begin{lem}{\label{betti}}
		{\normalfont Let $\Gamma$ be an $n$-dimensional Bieberbach group with holonomy group isomorphic to $C_m=\langle x|x^m=1\rangle$. Then $(\mathbb{Z}^n)^{C_m}\neq 0$.}
	\end{lem}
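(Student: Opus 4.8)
The plan is to exhibit an explicit nonzero element of $(\mathbb{Z}^n)^{C_m}$ as a suitable power of a lift of the generator $x$ of the holonomy group. Write $\Gamma$ through a short exact sequence $0\to\mathbb{Z}^n\xrightarrow{\iota}\Gamma\xrightarrow{p}C_m\to 1$ as in Section 2, and let $M=\rho(x)\in GL_n(\mathbb{Z})$, where $\rho$ is the holonomy representation, so that $M^m=I_n$ and $\mathbb{Z}^n$ carries the $C_m$-action given by $M$. Choose any $\bar x=(a,M)\in\Gamma$ with $p(\bar x)=x$, where $a\in\mathbb{R}^n$. Using the multiplication rule $(a_1,g_1)(a_2,g_2)=(a_1+\rho(g_1)(a_2),g_1g_2)$, an easy induction on $k$ gives $\bar x^{\,k}=\big((I_n+M+\cdots+M^{k-1})a,\;M^{k}\big)$; in particular
\[
\bar x^{\,m}=(Na,I_n),\qquad N:=I_n+M+\cdots+M^{m-1}.
\]

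Next I would record two things about the vector $Na$. Since $p(\bar x^{\,m})=x^m=1$, we have $\bar x^{\,m}\in\ker p=\iota(\mathbb{Z}^n)$, so $Na\in\mathbb{Z}^n$. And shifting the sum defining $N$ and using $M^m=I_n$ shows $MN=N$, hence $(M-I_n)Na=0$; as $C_m=\langle x\rangle$ acts through $M$, this says precisely $Na\in(\mathbb{Z}^n)^{C_m}$. It remains to check that $Na\neq 0$, and this is the only place the hypothesis enters: if $Na=0$ then $\bar x^{\,m}=(0,I_n)$ is the identity element of $\Gamma$, so $\bar x$ has finite order dividing $m$, while $p(\bar x)=x\neq 1$ forces $\bar x\neq 1$; thus $\bar x$ would be a nontrivial torsion element, contradicting that $\Gamma$ is Bieberbach. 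Hence $Na$ is a nonzero element of $(\mathbb{Z}^n)^{C_m}$. (If $m=1$ the statement is trivial.)

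I do not expect a genuine obstacle here; the points that need care are only bookkeeping. The element $a$ need only be assumed to lie in $\mathbb{R}^n$, and $Na\in\mathbb{Z}^n$ comes for free from $\bar x^{\,m}\in\iota(\mathbb{Z}^n)$, so no integrality of $a$ is used; and torsion-freeness is genuinely necessary, since for a crystallographic $\Gamma$ with torsion such as $\mathbb{Z}\rtimes C_2$ with $C_2$ acting by $-1$ the fixed lattice is $0$. One could alternatively argue cohomologically: because $C_m$ is cyclic, $H^2(C_m;\mathbb{Z}^n)\cong(\mathbb{Z}^n)^{C_m}/N\mathbb{Z}^n$, so $(\mathbb{Z}^n)^{C_m}=0$ would force the extension to split and $\Gamma$ to contain a copy of $C_m$, again contradicting torsion-freeness; but the explicit computation above is cleaner and actually produces a fixed vector. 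In topological terms this is the statement that a closed flat manifold with cyclic holonomy has positive first Betti number.
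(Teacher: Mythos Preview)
Your argument is correct, and it is more hands-on than the paper's. The paper argues by contradiction through cohomology: if $(\mathbb{Z}^n)^{C_m}=0$, then the periodic-cohomology formula $H^2(C_m;\mathbb{Z}^n)\cong(\mathbb{Z}^n)^{C_m}/N\mathbb{Z}^n$ forces $H^2=0$, so the defining extension of $\Gamma$ splits and $\Gamma$ contains a copy of $C_m$, contradicting torsion-freeness. You instead compute $\bar x^{\,m}=(Na,I_n)$ directly, observe $Na\in(\mathbb{Z}^n)^{C_m}$, and note that $Na=0$ would make $\bar x$ itself a nontrivial torsion element. This is exactly the alternative you sketch at the end, so you have in fact presented both proofs; the paper chose the cohomological one. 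Your explicit route has the mild advantage of actually producing a fixed lattice vector (namely $Na$), and it avoids invoking the identification of $H^2$ for cyclic groups, while the paper's version makes the connection to the extension class and to the splitting criterion more transparent. Both ultimately rest on the same mechanism: the norm element $N=1+x+\cdots+x^{m-1}$ controls whether a lift of $x$ has finite order.
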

	\begin{proof}
		Let $\alpha\in H^2(C_m,\mathbb{Z}^n)$ be the second cohomology class that corresponds to the extension (1) of $\Gamma$. Assume by contradiction that $(\mathbb{Z}^n)^{C_m}=0$. By \cite[Page 58]{Brown}, we have
		\[H^2(C_m,\mathbb{Z}^n)=(\mathbb{Z}^n)^{C_m}/\{(1+x+x^2+...+x^{m-1})z|z\in\mathbb{Z}^n\}\]
		Given $(\mathbb{Z}^n)^{C_m}=0$, hence we have $H^2(C_m,\mathbb{Z}^n)=0$, which forces $\alpha=0$. By \cite[Theorem 3.1]{cry}, since $\alpha=0$, that means the extension (1) of $\Gamma$ splits and therefore $\Gamma$ has torsion. Hence $\Gamma$ is not a Bieberbach group, which is a contradiction.
	\end{proof}

	\section{Main Result}
	
	\begin{thma}
		{\normalfont Let $\Gamma$ be an $n$-dimensional crystallographic group with holonomy group isomorphic to $C_m=\langle g|g^m=1\rangle$ where $m\geq 3$.
			
			($i$) If $m$ is divisible by prime larger than 3, then $d(\Gamma)\leq n-2$.
			
			($ii$) If $m$ is not divisible by prime larger than 3 and $\Gamma$ is torsion-free, then $d(\Gamma)\leq n-1$.}
	\end{thma}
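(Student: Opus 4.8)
The plan is as follows. Part $(i)$ is immediate from what has been established: by Remark~\ref{rmk gen}$(i)$, $d(\Gamma)\le rk_{C_m}(\Z^n)+d(C_m)=rk_{C_m}(\Z^n)+1$, and Proposition~\ref{rank c_m}$(i)$ gives $rk_{C_m}(\Z^n)\le n-3$, so $d(\Gamma)\le n-2$. For part $(ii)$ the same estimate, now via Proposition~\ref{rank c_m}$(ii)$, only yields $d(\Gamma)\le n$, so the task is to remove one further generator, and this is where torsion-freeness must be used. Fix a generator $g$ of $C_m$, set $N=1+g+\dots+g^{m-1}\in\Z C_m$, and pick $\alpha\in\Gamma$ with $p(\alpha)=g$; then $\alpha^m=\iota(w)$ for a unique $w\in(\Z^n)^{C_m}$, and replacing $\alpha$ by $\iota(u)\alpha$ replaces $w$ by $w+Nu$. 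As in the proof of Lemma~\ref{betti}, the class $[w]\in(\Z^n)^{C_m}/N\Z^n\cong H^2(C_m,\Z^n)$ is the cohomology class of the extension $(1)$, and it is nonzero because $\Gamma$ is torsion-free (a zero class would force a splitting and hence torsion, by \cite[Theorem~3.1]{cry}).

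The first step is a reduction: $d(\Gamma)\le n-1$ if and only if some lift $\delta$ of a generator of $C_m$ satisfies $rk_{C_m}(\Z^n/\Z w_\delta)\le n-2$, where $\delta^m=\iota(w_\delta)$. Indeed, applying elementary transformations to a generating tuple (these do not change the subgroup generated and act on the images in $C_m$ by the corresponding elementary transformations), one may bring any generating $(n-1)$-tuple to the form $\{\delta,\iota(x_2),\dots,\iota(x_{n-1})\}$ with $p(\delta)$ a generator of $C_m$ and $x_2,\dots,x_{n-1}\in\Z^n$. Moving all powers of $\delta$ to the left and using $\delta^m=\iota(w_\delta)$, one checks that $\langle\delta,\iota(x_2),\dots,\iota(x_{n-1})\rangle\cap\Z^n=\iota\bigl((\Z C_m)\{w_\delta,x_2,\dots,x_{n-1}\}\bigr)$, so the tuple generates $\Gamma$ exactly when $\{w_\delta,x_2,\dots,x_{n-1}\}$ generates $\Z^n$ as a $\Z C_m$-module; since $w_\delta$ is a fixed vector, $(\Z C_m)w_\delta=\Z w_\delta$, and this is possible for a suitable choice of the $x_i$ precisely when $rk_{C_m}(\Z^n/\Z w_\delta)\le n-2$. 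As $\delta$ ranges over all lifts, $w_\delta$ ranges over the coset $w+N\Z^n$, so it suffices to find $w'\in w+N\Z^n$ with $rk_{C_m}(\Z^n/\Z w')\le n-2$.

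The second step is to choose $w'$ to be a \emph{primitive} vector of $\Z^n$. Here one uses two facts. First, $(\Z^n)^{C_m}=\ker(g-1\colon\Z^n\to\Z^n)$ is a pure sublattice of $\Z^n$, because $\Z^n/(\Z^n)^{C_m}\cong(g-1)\Z^n$ is torsion-free. Second, torsion-freeness of $\Gamma$ gives $\operatorname{res}^{C_m}_{C_q}[w]\ne 0$ in $H^2(C_q,\Z^n)$ for every prime $q\mid m$ (apply \cite[Theorem~3.1]{cry} to the torsion-free subgroup $p^{-1}(C_q)$, where $C_q\le C_m$ is the subgroup of order $q$). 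From these I would show $w+N\Z^n\not\subseteq q\Z^n$ for every prime $q$: if $q\nmid m$ then $N$ acts as the unit $\bar m$ on the reduction of a primitive fixed vector, so already $N\Z^n\not\subseteq q\Z^n$; if $q\mid m$ and $w\in q\Z^n$, purity gives $w=qw_1$ with $w_1\in(\Z^n)^{C_m}\subseteq(\Z^n)^{C_q}$, so $\operatorname{res}^{C_m}_{C_q}[w]=[qw_1]=[N_qw_1]=0$ (as $N_q$ acts as $q$ on $(\Z^n)^{C_q}$), contradicting the second fact. An elementary lattice-point argument inside $(\Z^n)^{C_m}$ (in which $N\Z^n$ has finite index) then produces $w'\in w+N\Z^n$ primitive in $\Z^n$. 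For such $w'$, the quotient $\Z^n/\Z w'$ is a lattice of rank $n-1$ on which $C_m$ acts faithfully: if $(g^j-1)\Z^n\subseteq\Z w'$ then, $w'$ being fixed, $(g^j-1)^2=0$ as an integer matrix, so the finite-order matrix $g^j$ is unipotent, forcing $g^j=1$ in $GL_n(\Z)$ and hence in $C_m$. Thus Proposition~\ref{rank c_m}$(ii)$ gives $rk_{C_m}(\Z^n/\Z w')\le(n-1)-1=n-2$, and by the first step $d(\Gamma)\le n-1$.

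I expect the second step to be the main obstacle: deducing from torsion-freeness that $[w]$ has a representative which is a primitive vector — equivalently, that $w+N\Z^n$ is not contained in $2\Z^n$ or in $3\Z^n$. This is exactly where the hypothesis that $m$ is not divisible by a prime larger than $3$ is used beyond Proposition~\ref{rank c_m}, since it limits the subgroups at which the restriction criterion for torsion-freeness must be invoked to those of order $2$ and $3$. This route is equivalent to the one announced in the introduction: the surjection $\Gamma\to\Z$ is, up to scaling, induced by the ``$w'$-coordinate'' on $\Z^n$, its kernel $K$ meets $\Z^n$ in a complement of $\Z w'$, one has $\Gamma=K\rtimes\Z$, and studying how $\Z$ acts on $K$ amounts to the analysis of the faithful $\Z C_m$-module $\Z^n/\Z w'$ carried out above.
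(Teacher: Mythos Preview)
Part $(i)$ is fine and identical to the paper's argument. For part $(ii)$ your reduction in Step~1 is correct and is, as you say, essentially a reformulation of the paper's construction of a surjection $\Gamma\twoheadrightarrow\Z$ with abelian kernel. The gap is in Step~2, precisely at the word ``equivalently'': the condition ``$w+N\Z^n\not\subseteq q\Z^n$ for every prime $q$'' is necessary but \emph{not} sufficient for $w+N\Z^n$ to contain a vector primitive in $\Z^n$. The failure occurs exactly when $\operatorname{rk}\bigl((\Z^n)^{C_m}\bigr)=1$. Take $m=8$ and $\Z^n=\Z[\zeta_8]\oplus\Z$ (so $n=5$); then $(\Z^n)^{C_8}=0\oplus\Z$, $N\Z^n=0\oplus 8\Z$, and $H^2(C_8,\Z^n)\cong\Z/8\Z$. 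The class $w=3$ restricts nontrivially to $C_2$ (the only prime--order subgroup), so the corresponding $\Gamma$ is torsion-free, yet $w+N\Z^n=3+8\Z\subset\Z$ contains no primitive vector (no element equals $\pm1$). Thus there is no lift $\delta$ of $g$ with $w_\delta$ primitive, and Proposition~\ref{rank c_m}$(ii)$ cannot be applied to $\Z^n/\Z w_\delta$ as written.

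The repair is small: in Step~1 you correctly allow $\delta$ to lift \emph{any} generator of $C_m$, but then in the last sentence of Step~1 you discard this freedom by asserting that $w_\delta$ ranges only over $w+N\Z^n$. If $\delta$ lifts $g^k$ with $\gcd(k,m)=1$, a direct computation gives $\delta^m=\iota(kw+Nu)$, so $w_\delta$ actually ranges over $\bigl\{\,kw:\gcd(k,m)=1\,\bigr\}+N\Z^n$. In the rank-one case $L\cong\Z$, $N\Z^n=d\Z$ with $d\mid m$, your argument already gives $\gcd(w,d)=1$; since $(\Z/m\Z)^\times\to(\Z/d\Z)^\times$ is surjective, one may choose $k$ with $kw\equiv 1\pmod d$, producing a primitive $w_\delta$. (In the example above, $k=3$ works.) With this correction your route goes through and is equivalent to the paper's, which avoids the issue by building the map $f\colon\Gamma\to\Z$ from a single nonzero coordinate $x_i$ of a lift $\alpha$ and working directly with $\ker(f)$.
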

	\begin{proof}
		($i$): By Remark \ref{rmk gen}, we have $d(\Gamma)\leq rk_{C_m}(\Z^n)+1$. Since $m$ is divisible by prime larger than 3, by Proposition \ref{rank c_m}, we have $rk_{C_m}(\Z^n)\leq n-3$. Therefore we have $d(\Gamma)\leq n-2$.
		
		($ii$): By Remark \ref{rmk gen}, let $\Gamma=\langle \iota(e_1),...,\iota(e_n),\alpha\rangle$, where $e_1,...,e_n$ are the standard basis of $\Z^n$ and $p(\alpha)=g$. By \cite[Proposition 1.4]{LA} and \cite[Lemma 5.2]{cry}, we have $b_1(\Gamma)=rk((\Z^n)^{C_m})$. By Lemma \ref{betti}, let $k=b_1(\Gamma)>0$. Without loss of generality, every element of $\Gamma$ can be expressed as $\left(a,tri(M,I_k)\right)$ where $a\in\R^n$ and $M$ is a square matrix of dimension $n-k$. In particular, let $\alpha=\left(x,tri(A,I_k)\right)$ where $x=(x_1,...,x_n)\in\R^n$ and $A$ is a square matrix of dimension $n-k$ which do not fix any non-trivial elements. In other words, $Au=u$ if and only if $u=0$ for $u\in\R^{n-k}$. First we assume $x_{n-k+1}=\cdots=x_n=0$. Let $v:=(x_1,...,x_{n-k})\in\R^{n-k}$. By simple calculations, we get $\alpha^m=\left((\sum_{s=0}^{m-1}A^sv,0,...,0),I_n\right)$. Since $A(\sum_{s=0}^{m-1}A^sv)=\sum_{s=0}^{m-1}A^sv$, we have $\sum_{s=0}^{m-1}A^sv=0$. There is a contradiction because $\alpha^m=(0,I_n)$. Therefore there exists $i\in\{n-k+1,...,n\}$ such that $x_i=\frac{q}{z}\neq 0\in\Q$. Define $f:\Gamma\rightarrow\Z$ where it maps $\left((y_1,...,y_n),tri(M,I_k)\right)\in\Gamma$ to $zy_i\in\Z$. Hence we have $f(\alpha)=q$, $f(\iota(e_i))=z$ and $f(\iota(e_j))=0$ for all $j\neq i$. We claim that $f$ is a surjective homomorphism. Let $\gamma_1=((m_1,...,m_n),tri(M_1,I_k))\in\Gamma$ and $\gamma_2=((\bar{m_1},...,\bar{m_n}),tri(M_2,I_k))\in\Gamma$. By simple calculation, we get $\gamma_1\gamma_2=((*,...,*,m_{n-k+1}+\bar{m}_{n-k+1},...,m_n+\bar{m}_{n}),tri(M_1M_2,I_k))$. Hence we have $f(\gamma_1)+f(\gamma_2)=f(\gamma_1\gamma_2)$. Therefore $f$ is a homomorphism. Notice that $q$ and $z$ are coprime, there exists integers $\sigma$ and $\tau$ such that $\sigma q+\tau z=1$. Hence we have $f(\alpha^\sigma\iota(e_i)^\tau)=1$. Therefore $f$ is surjective. Observe that $ker(f)=\langle\iota(e_1),...,\hat{\iota(e_i)},...,\iota(e_n)\rangle\cong\Z^{n-1}$. We have the below short exact sequence
		\begin{equation}
		\begin{tikzpicture}[node distance=2.5cm, auto]
		\node (GA) {$\Gamma$};
		\node (G) [right of=GA] {$\Z$};
		\node (I) [right of=G] {$0$};
		\node (Z) [left of=GA] {$ker(f)\cong\mathbb{Z}^{n-1}$};
		\node (O) [left of=Z] {$0$};
		\draw[->] (GA) to node {$f$} (G);
		\draw[->] (G) to node {}(I);
		\draw[->] (O) to node {}(Z);
		\draw[->] (Z) to node {}(GA);	 
		\end{tikzpicture}
		\end{equation}
		By \cite[Chapter IV, Section 1]{Brown}, such short exact sequence will induce a representation $\rho:\Z\rightarrow GL_{n-1}(\Z)$ given by $\rho(x)e_j=\bar{x}\iota(e_j)\bar{x}^{-1}$ where $f(\bar{x})=x$ for all $j\neq i$. Consider the restriction $\bar{\rho}:=\rho|_{q\Z}:q\Z\rightarrow GL_{n-1}(\Z)$. We claim that $ker(\bar{\rho})=mq\Z$. Let $qx\in ker(\bar{\rho})$ for any $x\in\Z$. We have $e_j=\bar{\rho}(qx)e_j=\alpha^x\iota(e_j)\alpha^{-x}=p(\alpha^x)e_j$ for all $j\neq i$. Hence $p(\alpha^x)$ needs to be an identity matrix. Therefore $x$ is multiple of $m$ or $x=0$. Hence $ker(\bar{\rho})\subseteq mq\Z$. Since $p(\alpha^m)$ is an identity matrix, $\rho(mqx)(e_j)=\alpha^{mx}\iota(e_j)\alpha^{-mx}=p(\alpha^{mx})e_j=e_j$ for all $j\neq i$ and $x\in\Z$. Hence $mq\Z\subseteq ker(\bar{\rho})$. Therefore we have $ker(\bar{\rho})=mq\Z$. Now we can obtain a faithful representation $\phi:a\Z/ma\Z\rightarrow GL_{n-1}(\Z)$ given by $\phi(\bar{x})=\bar{\rho}(x)$ where $x$ is the representative of $\bar{x}\in a\Z/ma\Z$. Hence we can view $\Z^{n-1}$ as a $\Z C_m$-module with faithful $C_m$-representation. By Proposition \ref{rank c_m}, $\Z^{n-1}$ can be generated by $n-2$ elements. By (3), we have $d(\Gamma)\leq rk_{C_m}(\Z^{n-1})+1\leq n-1$.   
	\end{proof}
	The corollary below gives the general bound on the number of generators of general Bieberbach groups.
	\begin{cor}{\label{bdd}}
		{\normalfont Let $\Gamma$ be an $n$-dimensional Bieberbach group with holonomy group $G$. Then $d(\Gamma)\leq 2n$.}
	\end{cor}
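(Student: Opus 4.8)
The plan is to apply Remark \ref{rmk gen} and bound the two resulting terms separately, each by $n$. By Remark \ref{rmk gen}$(i)$ we have $d(\Gamma)\le rk_G(\mathbb{Z}^n)+d(G)$. Since any $n$ elements generating $\mathbb{Z}^n$ as an abelian group also generate it as a $\mathbb{Z}G$-module, $rk_G(\mathbb{Z}^n)\le n$; so it remains to show $d(G)\le n$, after which $d(\Gamma)\le n+n=2n$ follows.

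To prove $d(G)\le n$, note that the holonomy representation $\rho\colon G\to GL_n(\mathbb{Z})$ is faithful, so $G$ is a finite subgroup of $GL_n(\mathbb{Z})$; thus it is enough to show that every finite subgroup of $GL_n(\mathbb{Q})$ can be generated by $n$ elements. I would prove this by induction on $n$. Write the rational representation as $\mathbb{Q}^n=W_1\oplus W'$ with $W_1$ a $\mathbb{Q}$-irreducible $G$-submodule, and set $N=\ker\bigl(G\to GL(W_1)\bigr)$. If $W'\neq 0$, then $N$ acts faithfully on $W'$ and $G/N$ acts faithfully on $W_1$ (in each case the kernel of the action is $\ker(G\to GL(\mathbb{Q}^n))\cap(\text{the relevant subgroup})=1$), so the inductive hypothesis gives $d(N)\le\dim_{\mathbb{Q}}W'$ and $d(G/N)\le\dim_{\mathbb{Q}}W_1$; since a generating set of $N$ together with lifts of generators of $G/N$ generates $G$, we obtain $d(G)\le d(N)+d(G/N)\le n$. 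This leaves the base case in which $\mathbb{Q}^n$ itself is $\mathbb{Q}$-irreducible and faithful, where $D:=\mathrm{End}_{\mathbb{Q}G}(\mathbb{Q}^n)$ is a division algebra with $\dim_{\mathbb{Q}}D\mid n$ and $G$ embeds in $GL_{n/\dim_{\mathbb{Q}}D}(D)$; here one argues directly that $d(G)\le n$, for instance by a Clifford-theoretic analysis of a minimal normal subgroup of $G$ together with the permutation action of $G$ on the isotypic components it induces.

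I expect this last step — the $\mathbb{Q}$-irreducible base case — to be the main obstacle, since the reduction through Remark \ref{rmk gen}, the estimate $rk_G(\mathbb{Z}^n)\le n$, and the inductive step are all routine. In practice it may be cleanest to isolate and cite the statement that an $n$-dimensional crystallographic point group (equivalently, a finite subgroup of $GL_n(\mathbb{Z})$) is generated by $n$ elements, and feed it directly into the first paragraph; one should also observe that torsion-freeness of $\Gamma$ is never used, so the corollary in fact holds for all $n$-dimensional crystallographic groups.
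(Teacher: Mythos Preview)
Your approach diverges from the paper's. Both start from Remark~\ref{rmk gen}, but you then try to bound $d(G)$ and $rk_G(\mathbb{Z}^n)$ separately by $n$, whereas the paper couples the two estimates. Concretely, the paper invokes Guralnick's theorem $d(G)\le\max_p d(P_p)+1$, picks a Sylow $P_j$ realising the maximum, and then uses the bounds of \cite{nan} on both $d(P_j)$ and $rk_{P_j}(\mathbb{Z}^n)$ simultaneously: for $p_j$ odd one gets $d(P_j)+rk_{P_j}(\mathbb{Z}^n)\le n$ and hence $d(\Gamma)\le n+1$; when $p_j=2$ and $G$ is a $2$-group the bound $d(\Gamma)\le 2n$ is quoted directly from \cite{nan}; and when $p_j=2$ but $G$ contains an element of odd prime order $p$, the extra saving $rk_{C_p}(\mathbb{Z}^n)\le n-1$ from Lemma~\ref{rank c_p} compensates for the $+1$ in Guralnick's bound. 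At no point does the paper need $d(G)\le n$.

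The genuine gap in your proposal is exactly the one you flag: the $\mathbb{Q}$-irreducible base case, equivalently the assertion that every finite subgroup of $GL_n(\mathbb{Z})$ is $n$-generated. Your inductive reduction to that case is clean, but the base case is not established --- the phrase ``one argues directly \ldots\ by a Clifford-theoretic analysis'' is not a proof, and the suggestion to ``isolate and cite'' the statement does not help, since this is not a standard result available in the literature you could point to. Indeed, the route via Guralnick's theorem only yields $d(G)\le n+1$ in general (the Sylow $2$-subgroup can have $d(P_2)=n$, e.g.\ for $(\mathbb{Z}/2)^n$ acting diagonally), so shaving off the last unit requires real additional input. Until that base case is actually carried out, the argument is incomplete; the paper's proof sidesteps the issue entirely by never separating $d(G)$ from the module rank and by falling back on \cite{nan} in the $2$-group case.
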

	\begin{proof}
		Let $|G|=p_1^{s_1}\cdots p_k^{s_k}$ be the prime decomposition of order of $G$. By \cite[Theorem A]{gen of fin gp}, we have
		\[d(G)\leq \max_{1\leq i\leq k}d(P_i)+1\]
		where $P_i$ is the Sylow $p_i$-subgroup of $G$ for $i=1,...,k$. We fix $j\in\{1,...,k\}$ such that $d(P_j)=\max_{1\leq i\leq k}d(P_i)$. We first assume $p_j\geq 3$. We can consider $\Gamma\cap(\R^n\times I)\cong\Z^n$ as a $\Z P_j$-module. By \cite[Theorem A]{nan}, we have $d(P_j)+rk_{P_j}(\Z^n)\leq n$. Hence we have $d(\Gamma)\leq d(G)+1+rk_{P_j}(\Z^n)\leq n+1$. Now we assume $p_j=2$. If $G$ is a $2$-group, then by \cite[Theorem A]{nan}, we have $d(\Gamma)\leq 2n$. If $G$ is not a 2-group, then there exists $g\in G$ such that $g$ has order $p\geq 3$. Hence we can consider $\Z^n$ as a $\Z C_p$-module. By Lemma \ref{rank c_p}, we have $rk_{C_p}(\Z^n)\leq n-1$. By \cite[Proposition 2.2]{nan}, we have $d(P_j)\leq n$. Hence we have $d(\Gamma)\leq 2n$.
	\end{proof}

	\begin{cor}{\label{simple}}
		{\normalfont Let $\Gamma$ be an $n$-dimensional Bieberbach group with holonomy group $G$, where $G$ is a simple group but not $C_2$. Then $d(\Gamma)\leq n-1$.}
	\end{cor}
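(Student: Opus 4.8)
The plan is to split according to the structure of the finite simple group $G$, which is either cyclic of prime order $p\geq 3$ or nonabelian simple. The case $G\cong C_3$ is disposed of immediately: then $m=3$ is not divisible by any prime larger than $3$, and $\Gamma$ is torsion-free because it is a Bieberbach group, so Theorem A($ii$) applies directly and gives $d(\Gamma)\leq n-1$.

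In all remaining cases I claim that $|G|$ is divisible by some prime $p\geq 5$. Indeed, if $G\cong C_p$ is abelian simple and not isomorphic to $C_2$ or $C_3$, this is clear; and if $G$ is nonabelian simple, then $G$ is not solvable, so Burnside's $p^{a}q^{b}$ theorem forces $|G|$ to have at least three distinct prime divisors, hence a prime divisor $p\geq 5$. Fixing such a $p$ and, via Cauchy's theorem, an element $g\in G$ of order $p$, put $C_p=\langle g\rangle\leq G$. The restriction of the holonomy representation $\rho$ to $C_p$ is again faithful, so Proposition \ref{rank c_m}($i$) gives $rk_{C_p}(\mathbb{Z}^n)\leq n-3$. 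Any set of generators of $\mathbb{Z}^n$ as a $\mathbb{Z}C_p$-module is \emph{a fortiori} a set of generators as a $\mathbb{Z}G$-module, so $rk_G(\mathbb{Z}^n)\leq rk_{C_p}(\mathbb{Z}^n)\leq n-3$.

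Finally, every finite simple group is $2$-generated -- cyclic ones trivially, nonabelian ones by the classification of finite simple groups -- so $d(G)\leq 2$, and Remark \ref{rmk gen}($i$) then yields
\[
d(\Gamma)\leq rk_G(\mathbb{Z}^n)+d(G)\leq (n-3)+2=n-1 .
\]
The only ingredients not already established in the paper are Burnside's theorem, used only to guarantee a prime divisor $\geq 5$ of $|G|$, and the $2$-generation of nonabelian finite simple groups; I expect the only point requiring a moment's care is checking that $\rho|_{C_p}$ remains faithful so that Proposition \ref{rank c_m} is applicable, which is immediate since the restriction of a faithful representation to a subgroup is faithful.
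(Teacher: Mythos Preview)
Your proof is correct and follows essentially the same route as the paper's: Theorem~A handles the cyclic case(s), Burnside's $p^aq^b$ theorem supplies a prime $p\geq 5$ dividing $|G|$ in the nonabelian case, and then the bound $rk_{C_p}(\mathbb{Z}^n)\leq n-3$ combined with $d(G)\leq 2$ and Remark~\ref{rmk gen}($i$) finishes. The only cosmetic differences are that you split off $C_3$ and treat $C_p$ ($p\geq 5$) together with the nonabelian case (the paper applies Theorem~A uniformly to all odd-prime cyclic cases), and you invoke Proposition~\ref{rank c_m}($i$) where the paper cites Lemma~\ref{rank c_p} directly---both give the same $n-3$ bound.
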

	\begin{proof}
		By Remark \ref{rmk gen}, we have $d(\Gamma)\leq d(G)+rk_G(\Z^n)$. If $G$ is a cyclic group of odd prime order, then by Theorem A, we have $d(\Gamma)\leq n-1$. If $G$ is not cyclic, by Burnside's Theorem, \cite[Page 886]{dummit}, there exists a prime $p\geq 5$ such that the order of $G$ is divisible by $p$. So we can view $\Z^n$ as a $\Z C_p$-module. By Lemma \ref{rank c_p}, we have $rk_{C_p}(\Z^n)\leq n-p+a\leq n-3$, where $a=2$ if $p\leq 19$, otherwise $a=3$. By \cite[Theorem B]{simple}, we have $d(G)\leq 2$. Hence we have $d(\Gamma)\leq d(G)+rk_G(\Z^n)\leq 2+rk_{C_p}(\Z^n)\leq n-1$.
	\end{proof}
	The rest of the paper will present the proof of Theorem B and Theorem C.
	\begin{thmb}{\normalfont Let $\Gamma$ be an $n$-dimensional crystallographic group with holonomy group isomorphic to a finite group $G$, where the order of $G$ is not divisible by 2 or 3. Then $d(\Gamma)\leq n$.}
	\end{thmb}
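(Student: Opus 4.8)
The plan is to estimate $d(\Gamma)$ from the extension (1) and then to replace the holonomy group $G$ by one of its Sylow subgroups. Since $|G|$ is divisible by neither $2$ nor $3$, every prime dividing $|G|$ is at least $5$. By Remark \ref{rmk gen}($i$) we have $d(\Gamma)\leq rk_G(\Z^n)+d(G)$, so it suffices to prove $rk_G(\Z^n)+d(G)\leq n$; note that this is a purely module-theoretic assertion, so, unlike in Theorem C, no torsion-freeness of $\Gamma$ will be used.

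First I would bound $d(G)$. Write $|G|=p_1^{s_1}\cdots p_k^{s_k}$ with $5\leq p_1<\cdots<p_k$, let $P_i$ be a Sylow $p_i$-subgroup, and apply \cite[Theorem A]{gen of fin gp} to get $d(G)\leq\max_{1\leq i\leq k}d(P_i)+1$. Fix an index $j$ attaining the maximum and set $P:=P_j$, $p:=p_j$. Since the holonomy representation $\rho\colon G\to GL_n(\Z)$ is faithful, so is $\rho|_P$, hence $\Z^n$ is a faithful $\Z P$-module; and since $\Z P\subseteq\Z G$, any $\Z P$-generating set of $\Z^n$ is also a $\Z G$-generating set, so $rk_G(\Z^n)\leq rk_P(\Z^n)$. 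Combining, $d(\Gamma)\leq rk_P(\Z^n)+d(P)+1$, and the theorem reduces to
\[rk_P(\Z^n)+d(P)\leq n-1\]
for the $p$-group $P$, $p\geq 5$, acting faithfully on $\Z^n$.

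This last inequality is where \cite{nan} and the hypothesis $p\geq 5$ enter, and I expect it to be the main obstacle. Exactly as in the proof of Corollary \ref{bdd}, \cite[Theorem A]{nan} applied to the odd $p$-group $P$ already gives $rk_P(\Z^n)+d(P)\leq n$; the point is to save one further generator when $p\geq 5$. Heuristically this should be possible: choosing $g\in P$ of order $p$ and restricting $\Z^n$ to $\langle g\rangle\cong C_p$, Remark \ref{rmk ord} together with the argument of Lemma \ref{rank c_p} forces a $\Z$-irreducible $\Z C_p$-constituent of $\Z$-rank $p-1\geq 4$ which, being an ideal of $\Z[\zeta_p]$, is at most $2$-generated — a surplus of at least $(p-1)-2\geq 2$ of $\Z$-rank over module generators — and a bookkeeping argument refining the one behind \cite[Theorem A]{nan} (tracking this surplus up the filtration $tri(\cdots)$ of $\Z^n$ against the cost of one generator at the relevant step) should convert one unit of the surplus into the required improvement. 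For $p=3$ the same constituent has $\Z$-rank $2$ and surplus only $1$, which is exactly consumed by the additive constant $+1$ of \cite[Theorem A]{gen of fin gp}; this is precisely why $p=3$, and a fortiori $p=2$, must be excluded. If \cite[Theorem A]{nan} is already recorded in the sharper form yielding $n-1$ for $p\geq 5$, one simply invokes it.

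Assembling the pieces gives $d(\Gamma)\leq rk_P(\Z^n)+d(P)+1\leq (n-1)+1=n$. The reduction to Sylow subgroups, the faithfulness of $\rho|_P$, and the inequality $rk_G(\Z^n)\leq rk_P(\Z^n)$ are routine; the delicate point is the strengthened $p$-group estimate $rk_P(\Z^n)+d(P)\leq n-1$ for $p\geq 5$.
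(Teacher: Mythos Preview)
Your reduction is exactly the paper's: pass to a Sylow $p$-subgroup $P$ via Guralnick's theorem and then aim for $rk_P(\Z^n)+d(P)\leq n-1$. The gap is precisely where you flag it: you do not prove this inequality, and it is \emph{not} the content of \cite[Theorem~A]{nan}, which only gives $\leq n$; nor is a sharper form for $p\geq 5$ recorded there, so ``simply invoking it'' is not an option, and the heuristic bookkeeping you sketch is never carried out. As written, your argument terminates one generator short.

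The paper closes this gap not by refining the filtration argument behind \cite[Theorem~A]{nan}, but by going beneath it to two finer estimates and tracking an extra parameter, the fixed-point rank $r:=rk\big((\Z^n)^{P}\big)$. Namely, \cite[Proposition~2.2]{nan} gives $d(P)\leq\dfrac{n-r}{p-1}$ and \cite[Proposition~2.5]{nan} gives $rk_{P}(\Z^n)\leq\dfrac{(a-1)(n-r)}{p-1}+r$ with $a\in\{2,3\}$. Summing and adding the $+1$ from Guralnick yields
\[
d(\Gamma)\;\leq\;\frac{a(n-r)}{p-1}+r+1,
\]
and elementary algebra shows this is $\leq n$ provided $r\leq n-1-\dfrac{a}{p-1-a}$; for $p\geq 5$ the fraction is at most $1$, so $r\leq n-2$ suffices. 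Finally, any element of order $p$ in $P$ forces a $\Z$-irreducible $C_p$-block of dimension $p-1\geq 4$ in the holonomy representation, whence $r\leq rk\big((\Z^n)^{C_p}\big)\leq n-p+1\leq n-4$. The ``surplus of at least $2$'' you describe heuristically is exactly this bound on $r$; introducing $r$ explicitly is the missing ingredient that turns your outline into a proof.
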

	\begin{proof}
		Let $|G|=p_1^{s_1}\cdots p_k^{s_k}$ be the prime decomposition of the order of $G$. First, we want to calculate the number of generators of the holonomy group $G$. By \cite[Theorem A]{gen of fin gp}, we have
		\[d(G)\leq \max_{1\leq i\leq k}d(P_i)+1\]
		where $P_i$ is the Sylow $p_i$-subgroup of $G$ for $i=1,...,k$. We fix $j\in\{1,...,k\}$ such that $d(P_j)=\max_{1\leq i\leq k}d(P_i)$. Let $\rho:G\rightarrow GL_n(\mathbb{Z})$ be the holonomy representation for $\Gamma$. By definition, $\rho$ is a faithful representation. Therefore $P_i$ acts faithfully on $\mathbb{Z}^n$. By \cite[Proposition 2.2]{nan}, we have 
		\[d(G)\leq \frac{n-rk\left((\mathbb{Z}^n)^{P_j}\right)}{p_j-1}+1\]
		Now, we consider the lattice part. We can view $\Gamma\cap(\mathbb{R}^n\times I)\cong\mathbb{Z}^n$ as a $\mathbb{Z}P_j$-module. By \cite[Proposition 2.5]{nan}, we have 
		\[rk_{P_j}(\mathbb{Z}^n)\leq \frac{(a-1)\left(n-rk(\mathbb{Z}^n)^{P_j}\right)}{p_j-1}+rk(\mathbb{Z}^n)^{P_j}\]
		where $a=2$ if $p_j\geq 19$, otherwise $a=3$. Therefore we have
		\begin{align*}
		d(\Gamma)\leq d(G)+rk_{P_j}(\mathbb{Z}^n)&\leq \frac{n-rk\left((\mathbb{Z}^n)^{P_j}\right)}{p_j-1}+1+\frac{(a-1)\left(n-rk(\mathbb{Z}^n)^{P_j}\right)}{p_j-1}+rk(\mathbb{Z}^n)^{P_j}\\
		&=\frac{a\left(n-rk(\mathbb{Z}^n)^{P_j}\right)}{p_j-1}+rk(\mathbb{Z}^n)^{P_j}+1
		\end{align*}
		We need to show
		\[
		\frac{a\left(n-rk(\mathbb{Z}^n)^{P_j}\right)}{p_j-1}+rk(\mathbb{Z}^n)^{P_j}+1\leq n
		\]
		We have
		\begin{align*}
		&\frac{a\left(n-rk(\mathbb{Z}^n)^{P_j}\right)}{p_j-1}+rk(\mathbb{Z}^n)^{P_j}+1\leq n\\
		\iff & an-a\cdot rk(\mathbb{Z}^n)^{P_j}+(p_j-1)rk(\mathbb{Z}^n)^{P_j}+p_j-1\leq n(p_j-1)\\
		\iff & (p_j-1-a)rk(\mathbb{Z}^n)^{P_j}\leq (p_j-1-a)n-(p_j-1)\\
		\iff & rk(\mathbb{Z}^n)^{P_j}\leq n-\frac{p_j-1}{p_j-1-a}=n-1-\frac{a}{p_j-1-a}
		\end{align*}
		If $5\leq p_j\leq 19$, we have $\frac{a}{p_j-1-a}=\frac{2}{p_j-3}\leq 1$. If $p_j>19$, we have $\frac{a}{p_j-1-a}=\frac{3}{p_j-4}<1$. Therefore we can conclude that if $rk(\mathbb{Z}^n)^{P_j}\leq n-2$, then $d(\Gamma)\leq n$. By Cauchy's Theorem  \cite[Page 93]{dummit}, $P_j$ has an element $x\in P_j$ with order $p_j$. Let $C_{p_j}$ be a cyclic subgroup of $P_j$ generated by $x$. Consider $(\mathbb{Z}^n)^{C_{P_j}}$, where $C_{P_j}$ acts faithfully on $\mathbb{Z}^n$ via $\rho|_{C_{P_j}}:C_{P_j}\rightarrow GL_n(\mathbb{Z})$. By \cite[Section 73]{repn}, the degree of a faithful indecomposable $C_{p_j}$-representation is either $p_j-1$ or $p_j$. If the degree is $p_j-1$, then it has trivial fix point set. If the degree is $p_j$, then the fix point set is 1-dimensional. Observe that $rk(\mathbb{Z}^n)^{C_{p_j}}$ has maximum value when $\rho|_{C_{P_j}}$ is a direct sum of one faithful indecomposable sub-representation and all others are trivial sub-representations. Therefore $rk(\mathbb{Z}^n)^{C_{p_j}}\leq n-p_j+1\leq n-4$. Hence we have $rk(\mathbb{Z}^n)^{P_j}\leq n-4$. Therefore we can conclude $d(\Gamma)\leq n$.
	\end{proof}

	\begin{thmc}{\normalfont Let $\Gamma$ be an $n$-dimensional Bieberbach group with $2$-generated holonomy group. Then $d(\Gamma)\leq n$.}
	\end{thmc}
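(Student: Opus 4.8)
The plan is to reduce everything to the cyclic holonomy case already treated in Theorem A, by passing to a carefully chosen Bieberbach subgroup. Write $G=\langle a,b\rangle$ for the ($2$-generated) holonomy group and keep the notation $0\to\Z^n\xrightarrow{\iota}\Gamma\xrightarrow{p}G\to1$ from Section 2. First I would dispose of the degenerate possibilities: if $G$ is an elementary abelian $2$-group then, being $2$-generated, $G$ is trivial, $C_2$, or $C_2\times C_2$; here Theorem A does not apply, so I would instead invoke the theorem of Dekimpe--Penninckx \cite{nm gen bieb} that the conjecture holds when the holonomy group is elementary abelian, which gives $d(\Gamma)\le n$ at once (the trivial case being immediate since then $\Gamma\cong\Z^n$). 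So from now on assume $G$ is not an elementary abelian $2$-group.

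The first substantive step is a short group-theoretic observation: under this hypothesis there exist a cyclic subgroup $H\le G$ with $|H|\ge3$ and an element $x\in G$ with $G=\langle H,x\rangle$. Indeed, if one of the generators $a,b$ has order at least $3$, take $H$ to be the cyclic subgroup it generates and $x$ the other generator. Otherwise $a$ and $b$ are each trivial or of order $2$; if one of them is trivial then $G$ is cyclic of order $\le2$, contradicting the assumption, so both are involutions, whence $G=\langle ab,a\rangle$ is dihedral and $|ab|\ge3$ (otherwise $|G|\le4$ and $G$ is $C_2$ or $C_2\times C_2$); then $H=\langle ab\rangle$ and $x=a$ work.

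Next I would pass to the preimage $\Gamma':=p^{-1}(H)\le\Gamma$. Since $\Gamma'\cap(\R^n\times I)=\ker p\cong\Z^n$ and $\Gamma'/\Z^n\cong H$ acts on $\Z^n$ through the restriction of the holonomy representation — which is still faithful — $\Gamma'$ is an $n$-dimensional crystallographic group with cyclic holonomy group $H\cong C_{|H|}$; being a subgroup of the torsion-free group $\Gamma$, it is torsion-free, hence an $n$-dimensional Bieberbach group. Because $|H|\ge3$, Theorem A applies to $\Gamma'$: if $|H|$ has a prime divisor larger than $3$, part ($i$) gives $d(\Gamma')\le n-2$, and otherwise part ($ii$) (using that $\Gamma'$ is torsion-free) gives $d(\Gamma')\le n-1$; in either case $d(\Gamma')\le n-1$.

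Finally, choose $\bar x\in\Gamma$ with $p(\bar x)=x$. Then $\langle\Gamma',\bar x\rangle$ contains $\ker p=\iota(\Z^n)$ and maps onto $\langle H,x\rangle=G$ under $p$, so $\langle\Gamma',\bar x\rangle=\Gamma$; appending $\bar x$ to a generating set of $\Gamma'$ of size $d(\Gamma')$ therefore shows $d(\Gamma)\le d(\Gamma')+1\le n$, as required. I expect the main point needing care to be the group-theoretic step: the cyclic subgroup $H$ must be chosen so that a \emph{single} extra element completes it to a generating set of $G$ (this is where the $2$-generated hypothesis enters), which forces the dihedral case to be isolated and the genuinely non-cyclic elementary abelian $2$-case $C_2\times C_2$ to be handled by the separate citation above rather than by Theorem A.
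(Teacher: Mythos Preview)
Your proof is correct and follows essentially the same strategy as the paper's: dispose of the cases $G$ trivial, $C_2$, $C_2\times C_2$ by citing Dekimpe--Penninckx, then in the remaining case find a cyclic subgroup $H$ of order $\ge3$ together with one extra element generating $G$, pass to the Bieberbach subgroup with holonomy $H$, apply Theorem~A to get $d\le n-1$, and add back the lift of the extra element. The only cosmetic difference is that you phrase the subgroup as the preimage $p^{-1}(H)$ whereas the paper writes it explicitly as $\langle\iota(e_1),\dots,\iota(e_n),\alpha\rangle$; these are the same group.
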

	\begin{proof}
		Let $G$ be the holonomy group of $\Gamma$. Let $x$ and $y$ be the generators of $G$. They have order $a$ and $b$ respectively. If either $a=1$ or $b=1$, then $G$ is a cyclic group. By \cite[Theorem 5.7]{nm gen bieb} and Theorem A, $d(\Gamma)\leq n$. Next, consider cases where $a\geq 3$ or $b\geq 3$. It is sufficient to consider only the case where $a\geq 3$. By Remark \ref{rmk gen}, let $\Gamma=\langle \iota(e_1),...,\iota(e_n),\alpha,\beta\rangle$, where $e_1,...,e_n$ are the standard basis for $\mathbb{Z}^n$, $p(\alpha)=x$ and $p(\beta)=y$. Define $\Gamma'=\langle \iota(e_1),...,\iota(e_n),\alpha\rangle$. Notice that $\Gamma'$ is an $n$-dimensional Bieberbach subgroup of $\Gamma$ with holonomy group $C_a$. Since $a\geq 3$, by Theorem A, $d(\Gamma')\leq n-1$. 
		Hence we have $d(\Gamma)\leq n$. Finally, we assume $a=b=2$. Consider element $xy\in G$. Since $G$ is finite, $xy$ has finite order. If $xy$ is of order 1 (i.e. $xy=1$), then $x=y$. So $G\cong C_2$. By \cite[Theorem 5.7]{nm gen bieb}, $d(\Gamma)\leq n$. If $xy$ is of order 2 (i.e. $xyxy=1$), then $xy=yx$. Hence $G\cong C_2\times C_2$. By \cite[Theorem 5.7]{nm gen bieb}, we have $d(\Gamma)\leq n$. Lastly, we assume $xy$ is of order $k$, where $k\geq 3$. We can rewrite the generating set of $\Gamma$ to be $\{\iota(e_1),...,\iota(e_n),\alpha\beta,\beta\}$. Define $\Gamma''=\langle \iota(e_1),...,\iota(e_n),\alpha\beta\rangle $, which is an $n$-dimensional Bieberbach subgroup of $\Gamma$ with holonomy group isomorphic to $C_k$. By Theorem A, $d(\Gamma'')\leq n-1$. Therefore $d(\Gamma)\leq n$.
	\end{proof}

\end{document}